\documentclass{amsart}

\usepackage{color,graphicx,amssymb,latexsym,amsfonts,txfonts,amsmath,amsthm}
\usepackage{pdfsync}
\usepackage{amsmath,amscd}
\usepackage[all,cmtip]{xy}
\usepackage{multicol}

\usepackage{hyperref}
\hypersetup{
    colorlinks=true,       % false: boxed links; true: colored links
    linkcolor=blue,          % color of internal links
    citecolor=blue,        % color of links to bibliography
    filecolor=blue,      % color of file links
    urlcolor=blue           % color of external links
}

\input epsf

\newtheorem*{theoA}{Theorem}

\newtheorem{theo}{Theorem}

\newtheorem{coro}{Corollary}
\newtheorem{prop}{Proposition}
\newtheorem{lemm}{Lemma}
\newtheorem{fact}{Fact}

\newtheorem{rema}{Remark}
\newtheorem{exam}{Example}

\begin{document}

\title{On Real and Pseudo-Real Rational Maps}
\author{Ruben A. Hidalgo and Sa\'ul Quispe}

\subjclass[2000]{37F10; 37P45}
\keywords{Rational maps, Pseudo-real rational maps, Real rational maps, Automorphisms, Field of moduli, Field of definition}

\address{Departamento de Matem\'atica y Estad\'{\i}stica, Universidad de La Frontera,  Temuco, Chile}
\email{ruben.hidalgo@ufrontera.cl, saul.quispe@ufrontera.cl}
\thanks{Partially supported by Projects Fondecyt 1190001 and 11170129.}

\begin{abstract}
The moduli space ${\rm M}_{d}$, of complex rational maps of degree $d \geq 2$, is a connected complex orbifold which carries a natural real structure, coming from usual complex conjugation. Its real points are the classes of rational maps admitting antiholomorphic automorphisms. The locus of the real points ${\rm M}_{d}({\mathbb R})$ decomposes as a disjoint union of the loci ${\rm M}_{d}^{\mathbb R}$, consisting of the real rational maps, and ${\mathcal P}_{d}$, consisting of the pseudo-real ones. We obtain that, both ${\rm M}_{d}^{\mathbb R}$ and ${\rm M}_{d}({\mathbb R})$, are connected and that  ${\mathcal P}_{d}$ is disconnected. We also observe that 
the group of holomorphic automorphisms of a pseudo-real rational map is either trivial or a cyclic group. For every $n \geq 1$, we construct pseudo-real rational maps whose group of holomorphic automorphisms is cyclic of order $n$. As the field of moduli of a pseudo-real rational map is contained in ${\mathbb R}$, these maps provide examples of rational maps which are not definable over their field of moduli. It seems that these are the only explicit examples in the literature (Silverman) of rational maps which cannot be defined over their field of moduli.
We provide explicit examples of real rational maps which cannot be defined over their field of moduli. Finally, we also observe that every real rational map, which admits a model over the algebraic numbers, can be defined over the real algebraic numbers.   
\end{abstract}

\maketitle

%%%%%%%%%%%%%%%%%%%%
%%%%%%%%%%%%%%%%%%%%
\section{Introduction}
The space ${\rm Rat}_{d}$, of complex rational maps of degree $d \geq 2$, 
 can be identified with the affine variety ${\mathbb P}^{2d+1}({\mathbb C})\setminus V_{d}$, where $V_{d}$ is the Macaulay resultant (a hypersurface of degree $2d$); so it carries the structure of a connected complex manifold of dimension $2d+1$, and  it is known that $\pi_{1}({\rm Rat}_{d}) \cong {\mathbb Z}_{2d}$ \cite{Filom2,Milnor}.
 
For each M\"obius transformation $T \in {\rm PSL}_{2}({\mathbb C})$ and each rational map $\phi \in {\rm Rat}_{d}$, it holds that $\phi^{T}:=T \circ \phi \circ T^{-1} \in  {\rm Rat}_{d}$, and  the induced map $\phi \mapsto \phi^{T}$ is a holomorphic automorphism of ${\rm Rat}_{d}$. 
 The quotient space ${\rm M}_{d}={\rm Rat}_{d}/{\rm PSL}_{2}({\mathbb C})$, the moduli space of complex rational maps of degree $d$, has the structure of a connected complex orbifold of dimension $2(d-1)$ \cite{Milnor2}. If $\phi \in {\rm Rat}_{d}$ and ${\rm Aut}(\phi)$ is its  ${\rm PSL}_{2}({\mathbb C})$-stabilizer (called its group of holomorphic automorphisms), then $[\phi] \in M_{d}$ is a cone point of the orbifold structure of ${\rm M}_{d}$ if and only if ${\rm Aut}(\phi)$ is non-trivial. In \cite{Milnor}, Milnor proved that ${\rm M}_{2}$ can be identified with ${\mathbb C}^{2}$. It is known that 
 $\pi_{1}({\rm M}_{d})=0$ for $d$ even and, for $d$ odd, $\pi_{1}({\rm M}_{d}) \cong {\mathbb Z}_{2}$ \cite{Filom2}. 

Similarly, as for the case of M\"obius transformations, 
if $Q$ is an extended M\"obius transformation (i.e., a composition of a M\"obius transformation with the usual complex conjugation map $J(z)=\overline{z}$) and $\phi \in {\rm Rat}_{d}$, then $\phi^{Q}:=Q \circ \phi \circ Q^{-1} \in {\rm Rat}_{d}$ (if $\phi^{Q}=\phi$, then $Q$ is called an antiholomorphic automorphism of $\phi$) and the induced map $\phi \mapsto \phi^{Q}$ is an antiholomorphic automorphism of ${\rm Rat}_{d}$. 
Each of these antiholomorphic automorphisms induces the same antiholomorphic automorphism of order two $\widehat{J}$ on ${\rm M}_{d}$ (i.e., a real structure on ${\rm M}_{d}$). A fixed point of $\widehat{J}$ is called a real point of ${\rm M}_{d}$ and the set of all the real points is denoted by ${\rm M}_{d}({\mathbb R}) \subset {\rm M}_{d}$. 

If $\phi \in {\rm Rat}_{d}$, then $[\phi] \in {\rm M}_{d}({\mathbb R})$ if and only if $\phi$ admits an antiholomorphic automorphism (Lemma \ref{realista}). 
If a rational map admits a reflection (i.e., antiholomorphic involution with fixed points) as an automorphism, then it is called a real rational map; otherwise, it is called pseudo-real. If we denote by ${\rm M}_{d}^{\mathbb R}$ (respectively, ${\mathcal P}_{d}$) the locus in ${\rm M}_{d}$ consisting of the equivalence classes of real (respectively, pseudo-real) rational maps, then  ${\rm M}_{d}({\mathbb R})$ is the disjoint union of ${\rm M}_{d}^{\mathbb R}$ and ${\mathcal P}_{d}$. It is known that ${\rm M}_{d}^{\mathbb R}$ is a connected (Lemma \ref{Md(R)conexo}) real orbifold of real dimension $2(d-1)$ \cite{Filom,Silv2} and, for $d \geq 2$ even, Silverman \cite{Silv1} proved that ${\mathcal P}_{d}=\emptyset$. 
We prove the following connectedness properties.

\begin{theoA}
The space ${\rm M}_{d}({\mathbb R})$ is connected and, for $d \geq 3$ odd,  
${\mathcal P}_{d}$ is disconnected.
\end{theoA}

The proof of the first part is done at the end of Section \ref{Sec:auto} and the second one is provided in Section \ref{Sec:teo6}.

For each integer $n \geq 1$, we let 
${\mathcal B}_{d}(n) \subset {\rm M}_{d}$ be the locus of equivalence classes of degree $d$ rational maps admitting an antiholomorphic automorphism of order $2n$ (where, for $n=1$, we assume the automorphism is an imaginary reflection).  A characterization of the elements $\phi \in {\rm Rat}_{d}$ such that $[\phi] \in {\mathcal B}_{d}(n)$ is provided (Theorem \ref{teo9}), from which we may see that 
${\mathcal B}^{\mathbb R}_{d}(n):={\mathcal B}_{d}(n) \cap {\rm M}_{d}^{\mathbb R} \neq \emptyset$. We prove that ${\mathcal B}_{d}(n)$ is a connected real orbifold and we also compute its real dimension (Theorems \ref{teoimaginary} and \ref{conexon=1} and Corollaries  \ref{teoidentifica} and \ref{conexidad}).  The above permits to obtain the connectedness of ${\rm M}_{d}({\mathbb R})$ (Corollary \ref{realconexo}). The elements in ${\mathcal P}_{d}(n):={\mathcal P}_{d} \cap {\mathcal B}_{d}(n)={\mathcal B}_{d}(n) \setminus {\mathcal B}^{\mathbb R}_{d}(n)$ are exactly the classes of pseudo-real rational maps admitting antiholomorphic automorphisms of order $2n$.

In Section \ref{Sec:main} we provide the following fact on the group of automorphisms of pseudo-real rational maps.

\begin{theoA}
If $\phi \in {\rm Rat}_{d}$ is pseudo-real, then ${\rm Aut}(\phi)$  is either trivial or a cyclic group.
\end{theoA}

A characterization of pseudo-real rational maps,  in terms of their group of holomorphic automorphisms, is provided (Theorems \ref{autotrivial} and \ref{teo7}). 
Silverman's examples \cite{Silv1} are pseudo-real rational maps with trivial group of holomorphic automorphisms.
For each $n \geq 1$, we show the existence of a pseudo-real rational map whose group of holomorphic automorphisms is the cyclic group of order $n$ (Corollary \ref{coron=2} and Theorems \ref{T4} and \ref{teo10}). 
We may note that ${\mathcal P}_{d}$ is a union of the sets ${\mathcal P}_{d}(n)$, for $n \geq 1$. We observe that ${\mathcal P}_{d}(1) \neq \emptyset$ (Corollary \ref{coro3}) and that, if $n=2^{s}q$, where $q$ is odd, then ${\mathcal P}_{d}(n) \subset {\mathcal P}_{d}(2^{s})$. Then we note that ${\mathcal P}_{d}(2) \neq \emptyset$ (Corollary \ref{coron=2}) and that ${\mathcal P}_{d}(1) \cap {\mathcal P}_{d}(2^{s}) = \emptyset$ (Proposition \ref{interseccionreal}).
The above permits us to obtain that, for $d \geq 3$ odd, the set ${\mathcal P}_{d}$ is disconnected (Theorem \ref{disconexo}).

The second part of this article is more arithmetic in nature, it concerns with fields of definition and fields of moduli of rational maps.
A field of definition of a rational map is a subfield of ${\mathbb C}$ such that there is an equivalent rational map whose coefficients belong to that subfield.
The moduli space ${\rm M}_{d}$ has the structure of an affine algebraic variety defined over ${\mathbb Z}$ \cite{Silv2}. The field of definition of the point $[\phi]$ of the algebraic variety ${\rm M}_{d}$ is called the field of moduli of $\phi$; it coincides with the intersection of all its fields of definition \cite{Koizumi}.  In general, the field of moduli is not a field of definition; cohomological obstructions were obtained by Silverman in \cite{Silv1}.
Pseudo-real rational maps are the simplest examples of rational maps which cannot be defined over its field of moduli \cite{Silv1}. In the case of a real rational map, it is not so straightforward to decide if it can or not be definable over its field of moduli. By a suitable modification of Silverman's examples, we construct explicit real rational maps which cannot be defined over their field of moduli. In Section \ref{Sec:real} we obtain the following.

\begin{theoA}
If $d \geq 3$ is an odd integer, then 
the rational map $$\phi(z)=\left(3+2\sqrt{2}\;\right) \left( \frac{z^{d}+1}{z^{d}-1} \right) \in {\rm Rat}_{d}\left({\mathbb Q}\left(\sqrt{2} \;\right)\right)$$ 
has ${\mathbb Q}$ as its field of moduli but cannot be defined over it.
\end{theoA}

Finally, in Section \ref{Sec:arit} we obtain the following.

\begin{theoA}
Let ${\mathcal Q}$ be a conjugate-invariant algebraically closed subfield of ${\mathbb C}$.
A real rational map which is definable over ${\mathcal Q}$ is also definable over ${\mathbb R} \cap {\mathcal Q}$.
\end{theoA} 

A typical example of the above is when ${\mathcal Q}=\overline{\mathbb Q}$,
the field of algebraic numbers (i.e., arithmetic rational maps). In

\begin{rema}
As a final remark, let us note that the rational maps whose classes belong to ${\mathcal B}_{d}(1)$ are the ones that commute with the antiholomorphic transformation $\tau_{1}(z)=-1/\overline{z}$, so each of them induce dianalytic maps of the real projective plane ${\mathbb P}^{2}({\mathbb R})=\widehat{\mathbb C}/\langle \tau_{1}\rangle$ (and conversely). This type of maps have been considered in several papers. In \cite{BBM} and \cite[\S 5]{M}, the authors study the dynamics of elements in ${\mathcal B}_{3}(1)$.
\end{rema}

%%%%%%%%%%%%%%%%%%%%%
%%%%%%%%%%%%%%%%%%%%%
\section{Preliminaries}
We will use the language of orbifols and quotient spaces. Good references on these objects are \cite{Neeman,Thurston1,Thurston2}. For details on M\"obius and extended M\"obius transformations, the reader may consult the books \cite{Beardon} and \cite{Maskit:book}.

\subsection{Holomorphic and antiholomorphic transformations}

\subsubsection{M\"obius and extended M\"obius transformations}
It is well known that the holomorphic automorphisms of the Riemann sphere $\widehat{\mathbb C}$ are provided by the M\"obius transformations, that is, elements of the form
$A(z)=(az+b)/(cz+d)$, where $a,b,c,d \in {\mathbb C}$ are such that $ad-bc=1$. The group of M\"obius transformations can be identified with the projective linear group ${\rm PSL}_{2}({\mathbb C})$. The antiholomorphic automorphisms of $\widehat{\mathbb C}$ are provided by the extended M\"obius transformations (also called fractional reflections in \cite{Maskit:book}), that is, elements of the form $B(z)=(a\overline{z}+b)/(c\overline{z}+d)$, where $a,b,c,d \in {\mathbb C}$ are such that $ad-bc=1$.
The full group $\widehat{\rm PSL}_{2}({\mathbb C})$ of holomorphic and antiholomorphic automorphisms of $\widehat{\mathbb C}$  is generated by ${\rm PSL}_{2}({\mathbb C})$ and the reflection $J(z)=\overline{z}$.

\subsubsection{Classification of M\"obius transformations}
The M\"obius transformations are classified into elliptic, parabolic and loxodromic transformations. The elliptic ones can be conjugated to elements of the form $E(z)=e^{i \theta}z$, where $\theta \in {\mathbb R}$, the parabolic ones can be conjugated to $P(z)=z+1$, and the loxodromic ones can be conjugated to $L(z)=\lambda z$, where $\lambda \in {\mathbb C}$, $|\lambda| \neq 0,1$. The only elements of finite order are provided by the elliptic ones which can be conjugated to a power of one of the form $F(z)=e^{2 \pi i/n}z$. The finite subgroups of ${\rm PSL}_{2}({\mathbb C})$ are either cyclic, dihedral or one of the Platonic groups ${\mathcal A}_{4}$, ${\mathcal A}_{5}$ or ${\mathfrak S}_{4}$ \cite{Beardon}.

\subsubsection{Classification of extended M\"obius transformations}
The extended M\"obius transformations are classified as follows. A reflection is a transformation with a circle of fixed points, an imaginary reflection is one of order two without fixed points, a semi-elliptic is one whose square is an elliptic transformation, a semi-parabolic is one whose square is a parabolic transformation and a semi-hyperbolic is one whose square is a loxodromic transformation.
A reflection can be conjugated to $J(z)=\overline{z}$, an imaginary reflection can be conjugated to $\tau_{1}(z)=-1/\overline{z}$ and a semi-elliptic one can be conjugated to one of the form $\rho(z)=k/\overline{z}$, where $k$ is real and negative \cite[I.E.]{Maskit:book}. In particular, the extended M\"obius transformations of finite order are either reflections, imaginary reflections and those conjugated to odd powers of $\tau_{n}(z)=e^{i\pi/n}/\overline{z}$, where $n \geq 1$. Note that the only extended M\"obius transformations of finite order with fixed points are the reflections.

\subsubsection{An auxiliary lemma}
We will make use of the following fact in the proof of Proposition \ref{interseccionreal}.

\begin{lemm}\label{lema1}
Let $\eta_{1}$ and $\eta_{2}$ two different semi-elliptic transformations such that $G=\langle \eta_{1}, \eta_{2}\rangle$ is a finite group. Let $G^{+}$ be the index two subgroup of $G$ formed by its holomorphic elements. If $G^{+}$ is not cyclic, then $G$ contains a reflection.
\end{lemm}
\begin{proof}
If either $\eta_{1}$ or $\eta_{2}$ is a reflection, then we are done. Let assume none of them is a reflection. 
Let $G^{+}$ the index two subgroup of $G$ consisting of the M\"obius transformations. As we are assuming $\eta_{1}$ and $\eta_{2}$ to be different, this group is not the trivial one. As we are also assuming $G^{+}$ is not a cyclic group, it follows that $G^{+}$ is either dihedral or one of the Platonic groups ${\mathcal A}_{4}$, ${\mathcal A}_{5}$ or ${\mathfrak S}_{4}$. 
By the uniformization theorem, we may assume that the underlying Riemann surface of the quotient orbifold $\widehat{\mathbb C}/G^{+}$ 
is $\widehat{\mathbb C}$. Let us consider a regular branched covering $\pi:\widehat{\mathbb C} \to \widehat{\mathbb C}$ whose deck group is $G^{+}$. Its branch value set ${\mathcal B}$ has cardinality $3$. The restriction $\pi_{0}:\widehat{\mathbb C} \setminus \pi^{-1}({\mathcal B}) \to \widehat{\mathbb C} \setminus {\mathcal B}$ turns out to be a regular covering map with deck group $G^{+}$.
As $G^{+}$ is invariant, under conjugation, by $\eta_{j}$, and $\eta_{j}^{2} \in G^{+}$, we have that $\eta_{j}$ induces an antiholomorphic involution $\eta$ of $\widehat{\mathbb C}$ (preserving the set ${\mathcal B}$) such that $\pi \circ \eta_{j} = \eta \circ \pi$.
 As ${\mathcal B}$ has cardinality three, $\eta$ must fix one of them. This means that $\eta$ is a reflection, that is, it has a circle $C$ of fixed points. Let $\Sigma=C \setminus {\mathcal B}$. As $\pi_{0}$ is a regular covering map and $\eta$ is induced by elements of $G$ (which normalizes the deck group $G^{+}$), there must be an element of $G$ with fixed points, so necessarily a reflection.
\end{proof}

%%%%%%%%%%%%%%%%%%%%
\subsection{The moduli space of rational maps and its branch locus}

\subsubsection{The space of rational maps}
The space ${\rm Rat}_{d}$, of complex rational maps of degree $d \geq 2$, can be identified with the Zariski open set ${\mathbb P}^{2d+1}({\mathbb C})\setminus {\rm Res}_{d}$, where ${\rm Res}_{d} \subset {\mathbb P}^{2d+1}({\mathbb C})$ denotes the algebraic hypersurface defined by the resultant of two polynomials of degree at most $d$, under the injective map 
$$\Xi: {\rm Rat}_{d} \hookrightarrow {\mathbb P}^{2d+1}({\mathbb C}):
\phi(z)=\frac{\sum_{k=0}^{d} a_{k}z^{k}}{\sum_{k=0}^{d} b_{k}z^{k}} \mapsto \Xi(\phi)=[a_{0}:\cdots:a_{d}:b_{0}:\cdots:b_{d}].$$

In particular (see, for instance, \cite{Beardon}),  ${\rm Rat}_{d}$ carries a complex manifold structure of dimension $2d+1$. As ${\rm Res}_{d}$ has real codimension two in ${\mathbb P}^{2d+1}({\mathbb C})$, it follows that ${\rm Rat}_{d}$ is connected (see, for instance, \cite[Corollary 0.3.11]{DV}).  In \cite{Filom}, it was noted that 
$X_{d}:=\{(P,Q) \in {\mathbb C}[z]^{2}: {\rm max}\{{\rm deg}(P),{\rm deg}(Q)\}=d, {\rm Res}_{d}(P,Q)=1\}$ is a 
universal cover of ${\rm Rat}_{d}$, with universal covering map $(P,Q) \in X_{d} \mapsto P/Q \in {\rm Rat}_{d}$, whose deck group is $\pi_{1}({\rm Rat}_{d}) \cong {\mathbb Z}_{2d}$.

\subsubsection{Holomorphic automorphisms of ${\rm Rat}_{d}$}
If $T \in {\rm PSL}_{2}({\mathbb C})$ and $\phi \in {\rm Rat}_{d}$, then $\phi^{T}=T \circ \phi \circ T^{-1} \in {\rm Rat}_{d}$, and this induces a holomorphic automorphism of ${\rm Rat}_{d}$ (the restriction of a linear automorphism of ${\mathbb P}^{2d+1}({\mathbb C})$) \cite{Beardon2}. In this way, 
${\rm PSL}_{2}({\mathbb C})$ acts  as a group of holomorphic automorphisms of ${\rm Rat}_{d}$. 

The group ${\rm Aut}(\phi)=\{T \in {\rm PSL}_{2}({\mathbb C}): \phi^{T}=\phi\}$ (i.e., the ${\rm PSL}_{2}({\mathbb C})$-stabilizer of $\phi$) is called the group of holomorphic automorphisms of $\phi$. This group is finite (since a M\"obius transformation is uniquely determined by its action at three different points). It is known that a non-trivial finite subgroup of ${\rm PSL}_{2}({\mathbb C})$ is isomorphic to either: a cyclic group, a dihedral group, the alternating groups ${\mathcal A}_{4}, {\mathcal A}_{5}$ or the symmetric group ${\mathfrak S}_{4}$ \cite{Beardon}. In \cite{MSW}, it has been observed that each of these finite groups can be realized as the full group of holomorphic automorphisms of a rational map.

\subsubsection{The moduli space of rational maps}
The quotient space ${\rm M}_{d}={\rm Rat}_{d}/{\rm PSL}_{2}({\mathbb C})$ is called the moduli space of rational maps of degree $d$. 
In \cite{Milnor}, Milnor proved that ${\rm M}_{d}$ carries the structure of a connected complex orbifold of dimension $2(d-1)$
and, in \cite{Silv2}, Silverman showed that it has a natural structure of an affine geometric quotient defined over ${\mathbb Q}$.  
Milnor also noted that ${\rm M}_{2}$ can be identified with ${\mathbb C}^{2}$ (by using two of the three symmetrical forms of the multipliers of the fixed points). Recently, West \cite{West} obtained an explicit model for ${\rm M}_{3}$. It seems that, for $d \geq 4$, no explicit model for ${\rm M}_{d}$ is known. It is known that  
 $\pi_{1}({\rm M}_{d})=0$ for $d$ even and, for $d$ odd, $\pi_{1}({\rm M}_{d}) \cong {\mathbb Z}_{2}$ \cite{Filom2}.

\subsubsection{The branch locus of moduli space}
The branch locus of ${\rm M}_{d}$,  ${\mathcal B}_{d}=\{[\phi] \in {\rm M}_{d}: {\rm Aut}(\phi) \neq \{I\}\}$, was studied by Milnor in \cite{Milnor}. By identifying ${\rm M}_{2}$ with ${\mathbb C}^{2}$, the branch locus ${\mathcal B}_{2} \subset {\mathbb C}^{2}$ corresponds to the cubic curve \cite{Fujimura}
$ 2 x^3 + x^2 y - x^2- 4 y^2 - 8 x y + 12 x + 12 y  -36=0,$
where the cuspid $(-6,12)$ corresponds to the class of the rational map $\phi(z)=1/z^{2}$ with ${\rm Aut}(\phi) \cong D_{3}$ (the dihedral group of order $6$); all other points in the cubic correspond to those rational maps with the cyclic group ${\mathbb Z}_{2}$ as full group of holomorphic automorphisms. The above, in particular, permits to observe that ${\mathcal B}_{2} \neq \emptyset$ is connected. For $d \geq 3$,  it is known that ${\mathcal B}_{d} \neq \emptyset$ is the locus where ${\rm M}_{d}$ fails to be a topological manifold \cite{MSW} and, in \cite{HQ}, it was observed that ${\mathcal B}_{d}$ is always connected.

%%%%%%%%%%%%%%%%%%
\subsection{The real structure of moduli space}
\subsubsection{Antiholomorhic automorphisms of ${\rm Rat}_{d}$}
Let us consider an extended M\"obius transformation $Q=T \circ J$, where $T \in {\rm PSL_{2}}({\mathbb C})$ and $J(z)=\overline{z}$. 
If  $\phi \in {\rm Rat}_{d}$, then $\phi^{Q}=Q \circ \phi \circ Q^{-1} \in {\rm Rat}_{d}$. This provides an antiholomorphic automorphism of ${\rm Rat}_{d}$
$$\phi \in {\rm Rat}_{d} \mapsto \phi^{Q} \in {\rm Rat}_{d}.$$

For instance, the antiholomorphic automorphism of ${\rm Rat}_{d}$, induced by the reflection $J$, corresponds to the canonical conjugation in the model $\Xi({\rm Rat}_{d})={\mathbb P}^{2d+1}({\mathbb C}) \setminus {\rm Res}_{d}$:
\begin{equation}\label{conjugacion}
\begin{array}{ccc}
\phi \in {\rm Rat}_{d}& \mapsto & [a_{0}:\cdots:b_{d}] \in \Xi({\rm Rat}_{d}) \subset {\mathbb P}^{2d+1}({\mathbb C})\\
\downarrow & & \downarrow \\
\overline{\phi}=\phi^{J} \in {\rm Rat}_{d} & \mapsto & [\overline{a_{0}}:\cdots:\overline{b_{d}}]  \in \Xi({\rm Rat}_{d})  \subset {\mathbb P}^{2d+1}({\mathbb C}).
\end{array}
\end{equation}

Note that: (i) $\overline{\phi}=\phi^{J}$ is obtained from $\phi$ after conjugating its coefficients and (ii) the antiholomorphic automorphism induced by $Q$ is 
the composition of the above usual complex conjugation in the projective space with the restriction of the linear automorphism induced by $T$.

The $\widehat{\rm PSL}_{2}({\mathbb C})$-stabilizer of $\phi \in {\rm Rat}_{d}$, denoted by $\widehat{\rm Aut}(\phi)$, is the group of holomorphic and antiholomorphic automorphisms of $\phi$; the elements in $\widehat{\rm Aut}(\phi) \setminus {\rm Aut}(\phi)$ are the antiholomorphic automorphisms of $\phi$ (generically, a rational map of degree $d \geq 2$ has no antiholomorphic automorphisms).  As ${\rm Aut}(\phi)$ is a subgroup of index at most two in $\widehat{\rm Aut}(\phi)$, for $d \geq 2$, this group is also finite.

\subsubsection{The real structure on moduli space}
Any two extended M\"obius transformations induce the same antiholomorphic involution (i.e., a real structure) $\widehat{J}\;$ on ${\rm M}_{d}$; its fixed points are called its real points and the set of these real points is the real locus ${\rm M}_{d}({\mathbb R})$. A description of the real locus, in terms of automorphisms, is provided by the following.

\begin{lemm}\label{realista}
A point $[\phi] \in {\rm M}_{d}$ belongs to ${\rm M}_{d}({\mathbb R})$ if and only if $\widehat{\rm Aut}(\phi) \setminus {\rm Aut}(\phi) \neq \emptyset$.
\end{lemm}
\begin{proof}
Let us observe that $[\phi] \in {\rm M}_{d}$ is a real point if and only if $\phi \sim \overline{\phi}$, equivalently, if and only if there is  a M\"obius transformation $T \in {\rm PSL}_{2}({\mathbb C})$ such that $J \circ \phi \circ J=\overline{\phi}=T \circ \phi \circ T^{-1}$. This is equivalent that $J \circ T$ is an antiholomorphic automorphism of $\phi$. 
\end{proof}

\subsubsection{Real rational maps}
Let us consider the antiholomorphic automorphism of order two (induced by the complex conjugation map) as described in (\ref{conjugacion}) above, and let 
${\rm Rat}_{d}({\mathbb R}) \subset {\rm Rat}_{d}$ be the set  of its fixed points. Note that every rational map in  ${\rm Rat}_{d}({\mathbb R})$ is one 
whose coefficients belong to ${\mathbb R}$, in particular, they admit the reflection $J(z)=\overline{z}$ as an antiholomorphic automorphism.  Note also that ${\rm Rat}_{d}(\mathbb{R})$ is a real analytic submanifold of ${\rm Rat}_{d}$ of real dimension $2d+1$ (an open dense subset of ${\mathbb P}^{2d+1}({\mathbb R})$).
A rational map $\phi \in {\rm Rat}_{d}$ will be called a real rational map if it is equivalent to one in ${\rm Rat}_{d}({\mathbb R})$ (this is equivalent to say that $\phi$  
admits a reflection as antiholomorphic automorphism \cite{Hidalgo}).
We denote by ${\rm M}_{d}^{\mathbb R} \subset {\rm M}_{d}$ the locus consisting of the equivalence classes of real rational maps. Note that ${\rm M}_{d}^{\mathbb R} \subset {\rm M}_{d}({\mathbb R})$.

\subsubsection{Pseudo-real rational maps}
A rational map $\phi \in {\rm Rat}_{d}$ is called a pseudo-real rational map if it admits antiholomorphic automorphisms, but none of them is a reflection.
We denote by ${\mathcal P}_{d} \subset {\rm M}_{d}$ the locus consisting of the equivalence classes of pseudo-real rational maps. Again, ${\mathcal P}_{d} \subset {\rm M}_{d}({\mathbb R})$. In \cite{Silv1}, Silverman proved that, for $d \geq 2$ even, ${\mathcal P}_{d}=\emptyset$.

%%%%%%%%%%%%%%%%%%%%%%%%%%
\subsection{On the connectedness of ${\rm M}_{d}^{\mathbb R}$}
By Lemma \ref{realista},  ${\rm M}_{d}({\mathbb R})={\rm M}_{d}^{\mathbb R} \sqcup {\mathcal P}_{d}$. In this section we provide an argument to show the connectedness of ${\rm M}_{d}^{\mathbb R}$. 

A {\it real movement} of a real rational map $R \in {\rm Rat}_{d}({\mathbb R})$ is a continuous path $\gamma:[0,1] \to {\rm Rat}_{d}: t \mapsto R_{t}$, where each $R_{t} \in {\rm Rat}_{d}({\mathbb R})$ and $R=R_{0}$. Note that in such a real movement, we have the following properties:
\begin{enumerate}
\item[(i)] different zeroes (respectively, poles) may collide, but we cannot collide a zero with a pole (as we must keep the degree $d$ fixed), and 
\item[(ii)] the zeroes (respectively, the poles) of each $R_{t}$ are either reals or they are comming in pairs of complex conjugated points (with the same multiplicity). 
\end{enumerate}

The above real movement induces a continuous path in ${\rm M}_{d}^{\mathbb R}$ connecting $[R]$ and $[R_{1}]$. 

\begin{lemm}\label{Md(R)conexo}
If $d \geq 2$, then ${\rm M}_{d}^{\mathbb R}$ is connected.
\end{lemm}
\begin{proof}
The idea is to observe that, by performing real movements, we may connect any real rational map of degree $d \geq 2$ to a real rational map equivalent to $z \mapsto z^{d}$.

Let us start with a rational map $R(z)=\frac{P(z)}{Q(z)} \in {\rm Rat}_{d}$, where $P,Q \in {\mathbb R}[z]$ are relatively prime, and $d \geq 2$. Up to a real movement, we may assume all of its zeroes and poles to be contained in ${\mathbb R}$. In this case, both $P$ and $Q$ has the same degree $d$. So, 
\begin{equation}\label{eqrat}
R(z)=\lambda \frac{\prod_{j=1}^{r}(z-a_{j})^{l_{j}} }
{\prod_{j=1}^{s}(z-b_{j})^{t_{j}} },
\end{equation}
where $\lambda \in {\mathbb R} \setminus\{0\}$, $a_{j},b_{j} \in {\mathbb R}$ are different, and 
$d=l_{1}+\cdots+l_{r}=t_{1}+\cdots+t_{s}$.

\begin{fact}\label{Fact1}
We may assume that $r=s=L+1$, for some $L \geq 0$, such that 
$${\mathcal F}=\{a_{1}<b_{1}<a_{2}< \cdots <a_{L+1}<b_{L+1}\}.$$
\end{fact}

\begin{proof}
If two real zeroes $a_{j}$ and $a_{i}$ (respectively, two real poles $b_{j}$ and $b_{i}$) are such that between them there is no another zero or pole, then we perform a real movement that collide these two zeroes (respectively, the two poles), that is, the product $(z-a_{j})^{l_{j}}(z-a_{i})^{l_{i}}$ (respectively, $(z-b_{j})^{t_{j}}(z-b_{i})^{t_{i}}$) is transformed to $(z-a_{j})^{l_{j}+l_{i}}$ (respectively, $(z-b_{j})^{t_{j}+t_{i}}$). 
After this type of real movements, we may assume that the set of zeroes and poles are ordered in an alternating form. In this way, the set of the real zeroes and real poles must have either of the following two forms:
$${\mathcal F}_{1}=\{a_{1}<b_{1}<a_{2}< \cdots <a_{L+1}<b_{L+1}\} \; \mbox{or} \; 
{\mathcal F}_{2}=\{b_{1}<a_{1}<b_{2}< \cdots <b_{L+1}<a_{L+1}\},$$
for a suitable $L \geq 0$. If we are in case ${\mathcal F}_{2}$, then we may perform a real movement to assume $b_{1}=0$ and $a_{1}=1$. If we now conjugate the real rational map by the real M\"obius transformation $T(z)=z/(2z-1)$, then we obtain that its corresponding set of zeroes and poles is of the required form.
\end{proof}

\begin{fact}\label{Fact2}
Let $R$ be a real rational map whose associated set ${\mathcal F}$ of its real zeroes and real poles is as in Fact \ref{Fact1}.
Then, we may perform real movements to connect it to any one of the form
\begin{equation}\label{eqrat2}
\left[\frac{\lambda (z-a_{1})^{n_{1}}(z-a_{2})^{n_{2}}(z-a_{3})^{n_{3}}\cdots(z-a_{L+1})^{n_{L+1}}}
{(z-b_{1})^{m_{1}}(z-b_{2})^{m_{2}}\cdots(z-b_{L})^{m_{L}}(z-b_{L+1})^{m_{L+1}}}\right] \in {\rm M}_{d}^{\mathbb R},
\end{equation}
where $d=n_{1}+\cdots+n_{L+1}=m_{1}+\cdots+m_{L+1}$, and $n_{j}, m_{j} \geq 1$. Moreover, if $d$ is odd, then we may assume $L$ to be even.
\end{fact}
\begin{proof} Let us consider the form \eqref{eqrat} for $R$, where $r=s=L+1$, and with ${\mathcal F}$ as in Fact \ref{Fact1}.

If some $l_{j}>1$ (respectively, $t_{j}>1$) and we write $l_{j}=2\widehat{l}_{j}+l'_{j}$ (respectively, $t_{j}=2\widehat{t}_{j}+t'_{j}$), where $\widehat{l}_{j}>0$ and $l'_{j} \geq 0$ (respectively, $\widehat{t}_{j}>0$ and $t'_{j} \geq 0$), then we may perform a real movement to transform the factor $(z-a_{j})^{l_{j}}$ (respectively, $(z-b_{j})^{t_{j}}$) into a factor of the form $(z-a_{j})^{l'_{j}}((z-(a_{j}+i\epsilon))(z-(a_{j}-i\epsilon)))^{\widehat{l}_{j}}$ (respectively, $(z-b_{j})^{t'_{j}}((z-(b_{j}+i\epsilon))(z-(b_{j}-i\epsilon)))^{\widehat{t}_{j}}$). 
Next, we may apply another real movement to collapse the complex conjugated zeroes (respectively, poles) to any other real zero (respectively, real pole). This procedure asserts the first assertion.

Now, assume that $d \geq 3$ and $L \geq 1$ are both odd. Then some $m_{j}$ must be even. So we may perform a real movement (as above with $m'_{j}=0$ and $\widehat{m}_{j}=m_{j}/2$) to move the pole $b_{j}$ into another different pole. Next, we have to collapse the real zeroes $a_{j-1}$ with $a_{j+1}$ (if $j=L+1$, then we collapse $a_{L+1}$ with $a_{1}$). After this process, we have decreased $L$ by one.
\end{proof}

All the above permits to assume that our real rational map $R$ is as described in Fact \ref{Fact2}.

\subsection*{I}
If $L=0$, then the above asserts that $[R]$ can be continuously connected (inside ${\rm M}_{d}^{\mathbb R}$) to $[\lambda (z-a_{1})^{d}/(z-b_{1})^{d}]$. In this case, we can perform a real movement that moves $a_{1}$ to $0$ and $b_{1}$ to $+\infty$, from which we connect $[R]$ to $[\lambda z^{d}]=[z^{d}]$.

\subsection*{II}
Assume $L\geq 2$ is even. In this case we can continuously move the points $a_{j}$ and $b_{j}$ (in the real line)  in order to assume that the reflection, whose circle of fixed points $\Sigma$ is orthogonal to the real line at the points $a_{1+L/2}$ and $b_{L+1}$, keeps invariant the set of all the zeroes and poles. In this case,  in Fact 2, we consider $n_{1+L/2}=d-L=m_{L+1}$ (all others $n_{j}$ and $m_{j}$ equal to $1$). We may now conjugate the rational map by a M\"obius transformation carrying the circle $\Sigma$  to the real line to obtain a (necessarily real) rational map with exactly one real zero and one real pole (all the others are non-real). Next, we can make a real movement to send all non-real zeroes (respectively, non-real poles) to the real zero (respectively, the real pole). Now, we are in the case $L=0$ as considered above in {\bf I}.

\subsection*{III}
Assume $L \geq 1$ is odd and $d \geq 2$ is even.
In this case we can continuously move the points $a_{j}$ and $b_{j}$ in order to assume that the reflection, whose circle $\Sigma$ of fixed points is orthogonal to the real line in the points 
$a_{1}$ and $a_{(L+3)/2}$, keeps invariant the set of zeroes and the set of poles. In this case, in Fact 2, we consider 
$n_{1}+n_{(L+3)/2}=d+1-L$ (and all others $n_{j}=1$) and $m_{j}=m_{L+2-j}$, for $j=1,\ldots,(L+1)/2$. 
 We may now conjugate the rational map by a M\"obius transformation carrying the circle $\Sigma$  to the real line to obtain a (necessarily real) rational map with exactly two real zeroes and all of its other zeroes and poles being non-real. By a real movement, we can collapse these two real zeroes and to send all non-real zeroes (respectively, non-real poles) to the real zero to a real pole. We are again in the case $L=0$ as in {\bf I}.

\medskip

Summarizing all the above, we have noted that every point $[R] \in {\rm M}_{d}^{\mathbb R}$ can be connected by a continuos arc (inside ${\rm M}_{d}^{\mathbb R}$) to the point $[z^{d}]$. This proves the lemma.
\end{proof}

\begin{rema}\label{nota0}
The ${\rm PSL}_{2}({\mathbb C})$-stabilizer of ${\rm Rat}_{d}({\mathbb R})$ is ${\rm PGL}_{2}({\mathbb R})$.
The (coarse) moduli space ${\rm M}_{d,{\mathbb R}}={\rm Rat}_{d}({\mathbb R})/{\rm PGL}_{2}({\mathbb R})$ is a real orbifold, defined over ${\mathbb Q}$, of dimension $2(d-1)$ \cite{Silv2}. There is a surjective map from ${\rm M}_{d,\mathbb R}$ onto ${\rm M}_{d}^{\mathbb R}$. It fails to be injective at $[\phi] \in {\rm M}_{d,\mathbb R}$ if and only if $\phi$ admits two different reflections as automorphisms which are not conjugated by its ${\rm PGL}_{2}({\mathbb R})$-stabilizer. In particular, this map is injective on a dense open subset.
\end{rema}

%%%%%%%%%%%%%%
\subsection{Field of moduli / fields of definition}
\subsubsection{Fields of definition}
A field of definition of $\phi \in {\rm Rat}_{d}$ is a subfield ${\mathbb K}$ of ${\mathbb C}$ so that there is some rational map $\psi$, defined over ${\mathbb K}$, which is equivalent to $\phi$. 

\begin{lemm}\label{realista0}
Let $\phi \in {\rm Rat}_{d}$. Then $\phi$ is definable over ${\mathbb R}$ if and only if it admits a reflection as an automorphism.
\end{lemm}
\begin{proof}
If ${\mathbb R}$ is a field of definition of $\phi$, then there is some $\psi \in {\rm Rat}_{d}$, defined over the reals, and there is some $T \in {\rm PSL}_{2}({\mathbb C})$ such that $\phi=T \circ \psi \circ T^{-1}$. Since $J \circ \psi \circ J=\psi$, it follows that the reflection $T \circ J \circ T^{-1}$ is an automorphism of $\phi$. Conversely, if $\phi \in {\rm Rat}_{d}$ has a reflection $Q$ as an automorphism, then there is a $T \in {\rm PSL}_{2}({\mathbb C})$ such that $J=T \circ Q \circ T^{-1}$. Then $\psi=T \circ \phi \circ T^{-1}$ admits $J$ as an automorphism, so $\psi$ is defined over ${\mathbb R}$.
\end{proof}

\subsubsection{Field of moduli}
Let us denote by ${\rm Gal}({\mathbb C})$  the group of field automorphisms of ${\mathbb C}$.  The field of moduli of $\phi \in {\rm Rat}_{d}$ is the fixed field ${\mathcal M}_{\phi}$ of the group $\{\sigma \in {\rm Gal}({\mathbb C}):\phi \sim \phi^{\sigma}\}$. This is the field of definition of the point $[\phi]$  of the moduli space ${\rm M}_{d}$.
Following arguments due to Koizumi \cite{Koizumi}, it can be checked that ${\mathcal M}_{\phi}$ is the intersection of all the fields of definition of $\phi$. 

\begin{lemm}\label{realista2}
Let $\phi \in {\rm Rat}_{d}$. Then $\phi$ has field of moduli contained in ${\mathbb R}$ if and only if it admits an antiholomorphic automorphism.
\end{lemm}
\begin{proof}
The field of moduli of $\phi$ is contained in ${\mathbb R}$ if and only if $\phi \sim \overline{\phi}=J \circ \phi \circ J$. As previously noted (in the proof of Lemma \ref{realista}), this is equivalent for $\phi$ to has an antiholomorphic automorphism.
\end{proof}

In \cite{Silv1}, Silverman proved that rational maps, which are either of even degree or are equivalent to a polynomial, are definable over their field of moduli. In \cite{Hidalgo}, the first author noted that those rational maps, which cannot be defined over its field of moduli (so of odd degree), can be defined over a suitable quadratic extension of it.  By Lemma \ref{realista2}, real and pseudo-real rational maps are exactly those whose field of moduli is a subfield of ${\mathbb R}$, and (by Lemma \ref{realista0}) those having ${\mathbb R}$ as a field of definition are exactly the real ones. In particular, pseudo-real rational maps are examples of rational maps which cannot be defined over their field of moduli. 

%%%%%%%%%%
\subsubsection{Silverman's cohomological obstructions}\label{cohomology}
In \cite{Silv1}, Silverman described an obstruction, in terms of cohomology, for a rational map $\phi$ to be definable over a subfield ${\mathbb K}$. In this section we recall this interpretation.
If ${\rm Aut}_{\mathbb K}({\mathbb C})$ is the group of field automorphisms of ${\mathbb C}$, acting as the identity on ${\mathbb K}$, then a {\it 1-cocycle} of $\phi$, with respect to ${\mathbb K}$, is a map 
$$\chi:{\rm Aut}_{\mathbb K}({\mathbb C}) \to {\rm PSL}_{2}({\mathbb C}): \sigma \mapsto T_{\sigma}$$
such that, $\forall \sigma, \tau \in {\rm Aut}_{\mathbb K}({\mathbb C})$ the following equalities hold
$$
\phi^{\sigma}=T_{\sigma} \circ \phi \circ T_{\sigma}^{-1}, \quad 
T_{\sigma \tau}=T_{\tau}^{\sigma} \circ T_{\sigma},
 $$
where $\phi^{\sigma}$ is the rational map obtained by applying $\sigma$ to the coefficients of $\phi$.  The set of such 1-cocycles, is the cohomology set $H^{1}\!\left({\rm Aut}_{\mathbb K}({\mathbb C}),{\rm PSL}_{2}({\mathbb C})\right)$. The existence of a 1-cocycle for $\phi$, with respect to ${\mathbb K}$, is not clear in general. 

By Weil's descent theorem \cite{Weil} (see also Theorem 16.37 in page 233 in \cite{Milne}), the existence of 
a 1-cocycle $\chi$ for $\phi$, with respect to ${\mathbb K}$, ensures the existence of a non-singular complex projective algebraic curve of genus zero $X$, defined over ${\mathbb K}$, and a biregular isomorphism $\varphi:\widehat{\mathbb C} \to X$, defined over $\overline{\mathbb K}$  (the algebraic closure of ${\mathbb K}$ inside ${\mathbb C}$), such that $\varphi=\varphi^{\sigma} \circ T_{\sigma}$, for every $\sigma \in  {\rm Aut}_{\mathbb K}({\mathbb C})$. It can be seen that  $\Psi=\varphi \circ \phi \circ \varphi^{-1}$ is also defined over ${\mathbb K}$. Now, if there is some isomorphism $L:X \to \widehat{\mathbb C}$, defined over ${\mathbb K}$, then $P=L \circ \varphi \in {\rm PSL}_{2}({\mathbb C})$ and the map $P \circ \phi \circ P^{-1}$ is defined over ${\mathbb K}$.  As a consequence of Riemann-Roch's theorem, in order to have the existence of such an $L$, we only need to check the existence of some ${\mathbb K}$-rational point in $X$. In terms of the 1-cocycle $\chi$, this is equivalent for it to be 
a {\it 1-coboundary}, that is, $\chi(\sigma)=(T^{\sigma})^{-1} \circ T$ for a suitable 
M\"obius transformation $T \in {\rm PSL}_{2}({\mathbb C})$. Summarizing the above, the rational map $\phi$ can be defined over the subfield ${\mathbb K}$ if it admits a 1-coboundary,  with respect to ${\mathbb K}$.

%%%%%%%%%%%%%%%%%%%%%%%%%%
%%%%%%%%%%%%%%%%%%%%%%%%%%
\section{Rational maps with non-trivial holomorphic/antiholomorphic automorphisms}\label{Sec:auto}
In this section, we provide a description of rational maps admitting antiholomorphic automorphisms and the dimensions of the corresponding spaces. We first recall the situation for holomorphic automorphisms as considered in \cite{MSW}.

%%%%%%%%%%%%%%%%%
\subsection{Rational maps with non-trivial holomorphic automorphisms}
Let $\phi \in {\rm Rat}_{d}$, where $d \geq 2$, admitting a non-trivial holomorphic automorphism $T_{n}$ of order $n \geq 2$. Up to conjugation by a suitable M\"obius transformation, we may assume  
$$T_{n}(z)=\omega_{n} z, \quad \omega_{n}=e^{2 \pi i/n},$$
in which case $\phi(z)=z\psi(z^{n})$, for a suitable rational map $\psi$.

\begin{theo}[\cite{MSW, HQ}]\label{teociclico}
Let $d,n \geq 2$ be integers. 
\begin{enumerate}
\item There is a rational map of degree $d$ admitting a holomorphic automorphism of order $n$ if and only if $d$ is congruent to either $-1,0,1$ modulo $n$. 
\item A rational map admitting a holomorphic automorphism of order $n$ is equivalent to one of the form $\phi(z)=z\psi(z^{n})$, where $$\psi(z)=\frac{\sum_{k=0}^{r} a_{k}z^{k}}{\sum_{k=0}^{r} b_{k}z^{k}} \in {\rm Rat}_{r},$$
satisfies that
\begin{itemize}
\item[(a)] $a_{r}b_{0}  \neq 0$,  if $d=nr+1$.

\item[(b)] $a_{r}\neq 0$ and $b_{0}=0$, if $d=nr$.

\item[(c)] $a_{r}=b_{0}=0$ and $b_{r} \neq 0$, if $d=nr-1$.
\end{itemize}
\end{enumerate}
\end{theo}

\begin{rema}
(i) If $d=2$, then  $(n,r) \in \{(2,1), (3,1)\}$; in other words, a rational map of degree two has holomorphic automorphisms of order either two or three.
(ii) In part (2) of Theorem \ref{teociclico},  
since $n \geq 2$, conditions {\rm (a)} and {\rm (b)} (respectively, {\rm (b)} and {\rm (c)}) are complementary. If $n>2$, then {\rm (a)} and {\rm (c)} are also complementary. 
\end{rema}

Let us  denote by ${\rm M}_{d,{\mathbb Z}_{n}}$ the locus in ${\rm M}_{d}$ consisting of the classes of rational maps admitting a holomorphic automorphism of order $n$. Then the branch locus ${\mathcal B}_{d}$ is the union of these ${\rm M}_{d,{\mathbb Z}_{n}}$, where $n \geq 2$, runs over those as in (1) in the previous result.

\begin{coro}[\cite{MSW}]\label{corodim}
Let $d, n \geq 2$ be integers so that $d$ is congruent to either $-1,0,1$ modulo $n$. Then ${\rm M}_{d,{\mathbb Z}_{n}}$ is connected and
$${\rm dim}_{\mathbb C}({\rm M}_{d,{\mathbb Z}_{n}})=\left\{ \begin{array}{ll}
2(d-1)/n, & d \equiv 1 \mod n.\\
(2d-n)/n, & d \equiv 0 \mod n.\\
2(d+1-n)/n, & d \equiv -1 \mod n.
\end{array}
\right.
$$
\end{coro}
\begin{proof}[Idea of the proof]
Let us consider the case $d \equiv 1 \mod n$, say $d=nr+1$ (the other two cases follow from similar arguments). By Theorem \ref{teociclico}, every rational map of degree $d$ admitting a holomorphic automorphism of order $n$ can be conjugated to one of the form $\phi(z)=z\psi(z^{n})$, where $\psi$ is as stated in part (2) of such theorem. If $\Omega \subset {\rm Rat}_{d}$ is the locus of such normalized rational maps, then it projects onto ${\rm M}_{d,{\mathbb Z}_{n}}$. There is a natural isomorphism between $\Omega$ and  the open and dense set 
${\rm Rat}_{r}={\mathbb P}^{2r+1}({\mathbb C}) \setminus {\rm Res}_{r}$, by sending the normalized rational map $\phi(z)=z\psi(z^{n})$ to $\psi(z)$.
Now, for the generic case (an open dense subset $\Omega_{0} \subset \Omega$), these rational maps have no extra automorphisms. So, if a M\"obius transformation $A \in {\rm PSL}_{2}({\mathbb C})$ conjugates one of these generic ones into another, then such a map must normalize the cyclic group $\langle T_{n}(z)=\omega_{n}z\rangle$, that is, it must be of the form 
$A_{\lambda}(z)=\lambda z$ or $B_{\lambda}(z)=\lambda/z$, where $\lambda \in {\mathbb C}^{*}:={\mathbb C}\setminus\{0\}$. If we set $G=\langle A_{\lambda}, B_{\lambda}: \lambda \in {\mathbb C}^{*}\rangle$, then it happens that $G$ is the normalizer of $\Omega$ in the group ${\rm PSL}_{2}({\mathbb C})$. As 
$\Omega_{0}/G$ is homeomorphic to an open and dense subset of ${\rm M}_{d,{\mathbb Z}_{n}}$ and it has dimension $2r=2(d-1)/n$, we are done in this case. 
\end{proof}

\begin{rema}
Corollary \ref{corodim}, for fixed $d \geq 2$, asserts that the dimension of ${\rm M}_{d,{\mathbb Z}_{n}}$ is maximal for $n=2$. In this case, 
${\rm dim}_{\mathbb C}({\rm M}_{d,{\mathbb Z}_{2}})=d-1$, in particular, ${\rm M}_{d,{\mathbb Z}_{2}} \neq \emptyset$. In \cite{HQ}, we used this observation to obtain the connectedness of ${\mathcal B}_{d}$.
\end{rema}

\begin{rema}
In \cite{MSW}, it can be found the general description for those rational maps admitting all possible finite subgroups of ${\rm PSL}_{2}({\mathbb C})$, together with the corresponding dimensions in moduli space.
\end{rema}

%%%%%%%%%%%%%%%%%%%%%%%%%%
%%%%%%%%%%%%%%%%%%%%%%%%%%
\subsection{Rational maps with antiholomorphic automorphisms}\label{Sec:antiauto}
Let $\phi \in {\rm Rat}_{d}$, $d \geq 2$, admitting an antiholomorphic automorphism $\tau$. As $\widehat{\rm Aut}(\phi)$ is a finite group,  $\tau$ must have finite order and, as it reverses the orientation, its order is even, say $2n$, for some $n \geq 1$. 
We may conjugate $\tau$ by a suitable M\"obius transformation (\cite[IE]{Maskit:book}) to either:
\begin{enumerate}
\item (if $n=1$) $J(z)=\overline{z}$ or $\tau_{1}(z)=-1/\overline{z}$; or
\item (if $n \geq 2$) to a power $\tau_{n}^{k}$, where $(k,n)=1$, and $$\tau_{n}(z)=\frac{\omega_{2n}}{\overline{z}}, \quad \omega_{2n}=e^{\pi i/n}.$$
\end{enumerate}
It follows that $\phi$ is equivalent to one admitting either $J$ or $\tau_{n}$ as an antiholomorphic automorphism.

%%%%%%%%%%%%%%%
\subsubsection{\bf The case of reflections}
If $n=1$ and $\tau$ is a reflection, then  $\phi$ is a real rational map.  In this case, $\phi$ is definable over ${\mathbb R}$. We already noted that ${\rm M}_{d}^{\mathbb R}$ is connected (Lemma \ref{Md(R)conexo}) and of real dimension $2(d-1)$.

%%%%%%%%%%%%%%%
\subsubsection{\bf The case of antiholomorphic automorphisms  different from reflections}\label{Sec:tn}
For each $n \geq 1$,  let us denote by $B_{d}(n) \subset {\rm Rat}_{d}$ the locus of rational maps of degree $d$ admitting $\tau_{n}$ as one of its antiholomorphic automorphisms. Let ${\mathcal B}_{d}(n) \subset {\rm M}_{d}$ be the locus of the equivalence classes  of the elements of $B_{d}(n)$. Note, in particular, that $B_{d}(1)$ consists of those rational maps admitting the imaginary reflection $\tau_{1}(z)=-1/\overline{z}$ as an automorphism.

Each rational map, admitting an antiholomorphic automorphism of order $2n$ (where, for $n=1$, we assume it is not a reflection), is equivalent to one in $B_{d}(n)$, in particular, ${\mathcal B}_{d}(n)$ consists of all equivalence classes of rational maps admitting an antiholomorphic automorphism of order $2n$ (which is not a reflection for $n=1$).

It may happen that some elements of $B_{d}(n)$ have a reflection as an antiholomorphic automorphism. We set 
$B_{d}^{\mathbb R}(n) \subset B_{d}(n)$ the sublocus of real rational maps inside $B_{d}(n)$ and by ${\mathcal B}^{\mathbb R}_{d}(n) = {\mathcal B}_{d}(n)  \cap {\rm M}^{\mathbb R}_{d}$ its corresponding  image sublocus. In particular, the set ${\mathcal P}_{d}(n)={\mathcal B}_{d}(n) \setminus {\mathcal B}_{d}^{\mathbb R}(n)$ (which it might be empty) is the locus in moduli space ${\rm M}_{d}$ of the equivalence classes of pseudo-real rational maps admitting an antiholomorphic automorphism of order $2n$.

\begin{rema}
If $n=2^{s}q$, where $q \geq 1$ is odd and $s \geq 0$, then ${\mathcal B}_{d}(n) \subset {\mathcal B}_{d}(2^{s})$. For instance, if 
$n$ is odd, then ${\mathcal B}_{d}(n) \subset {\mathcal B}_{d}(1)$.  
\end{rema}

\begin{prop}\label{interseccionreal}
If $ s \neq t$, then ${\mathcal B}_{d}(2^{s}) \cap {\mathcal B}_{d}(2^{t}) \subset {\rm M}_{d}^{\mathbb R}$.
\end{prop}
\begin{proof}
As $t \neq s$, we may assume $0 \leq s<t$, so we may write $t=s+l$, for some $l \geq 1$.
Let $[\phi] \in {\mathcal B}_{d}(2^{s}) \cap {\mathcal B}_{d}(2^{t})$, where 
$\phi \in B_{d}(2^{t})$. Then there is some $\phi_{1} \in B_{d}(2^{s})$ such that $[\phi]=[\phi_{1}]$. Then there is a M\"obius transformation $T \in {\rm PSL}_{2}({\mathbb C})$ such that $T \circ \phi_{1} \circ T^{-1}=\phi$, and  $\eta_{2}=T \circ \tau_{2^{s}} \circ T^{-1} \in \widehat{\rm Aut}(\phi)$ has order $2^{s+1}$.
If we set $\eta_{1}=\tau_{2^{t}}$, then 
$G=\langle \eta_{1},\eta_{2}\rangle \leq \widehat{\rm Aut}(\phi)$ and $G^{+}=G \cap {\rm Aut}(\phi)$. As $\eta_{1} \neq \eta_{2}$, the group $G^{+}$ is not the trivial group. If $G^{+}$ is not a cyclic group, then Lemma \ref{lema1} asserts that in $G$ there is a reflection, in particular, $[\phi] \in {\rm M}_{d}^{\mathbb R}$.
Let us now assume $G^{+}$ is a cyclic group, say generated by an elliptic transformation $E$. As $\eta_{1}^{2} \in G^{+}$ has order $2^{t}>1$, and its two fixed points are $0$ and $\infty$, it follows that the fixed points of $E$ are also $0$ and $\infty$. So, up to a suitable power, we may assume 
$E(z)=\omega_{n}z$, where $\omega_{n}=e^{2 \pi i/n}$, for $n \geq 2$. 
In this case, $\eta_{2}$ must keep invariant the set of fixed points of $E$, that is, the set $\{0,\infty\}$. If $\eta_{2}$ fixes $0$, then it must be a reflection and we are done. So, let us assume $\eta_{2}(0)=\infty$ and $\eta_{2}(\infty)=0$. This asserts that $\eta_{2}=\tau_{2^{s}}$ and, in particular, that $\phi \in B_{d}(2^{s}) \cap B_{d}(2^{t})$. As
$\eta_{1}(z)=e^{\pi i/2^{s+l}}/\overline{z},$
if we set $\alpha=\eta_{1}^{2}$, then 
$\alpha(z)=e^{\pi i/2^{s+l-1}}z$
is a holomorphic automorphism of $\phi$. As $\alpha^{L}(z)=e^{\pi i/2^{s}}z$, for $L=2^{l-1}$, one has $\alpha^{-L} \circ \tau_{2^{s}} (z)=1/\overline{z}$ is a reflection 
and an automorphism of $\phi$.
\end{proof}

%%%%%%%%%%%%%%%%%%%%%
\subsection*{The case of ${\bf n=1}$}
Let us first consider the case of rational maps admitting an imaginary reflection as an automorphism.

\begin{theo}\label{teoimaginary}
\hspace{2em}
\begin{enumerate}
\item If a rational map of degree $d$ admits an imaginary reflection as an antiholomorphic automorphism, then $d$ must be odd. 
\item Let $d$ be odd and let $\phi$ be a rational map of degree $d$. Then $\phi$ admits an imaginary reflection $\tau$ as an automorphism if and only if it can be conjugated to one of the form
$$\phi(z)=\dfrac{\sum_{k=0}^{d} a_{k} z^{k}}{\sum_{k=0}^{d} (-1)^{k} \overline{a_{d-k}} z^{k}} \in {\rm Rat}_{d},$$
where $\tau$ corresponds to $\tau_{1}(z)=-1/\overline{z}$.
\end{enumerate}
\end{theo}
\begin{proof}
Assume $\phi \in {\rm Rat}_{d}$ admits an imaginary reflection $\tau$ as an antiholomorphic automorphism. Up to conjugation by a suitable M\"obius transformation, we may assume that $\tau=\tau_{1}$. If 
$$\phi(z)=\dfrac{\sum_{k=0}^{d} \alpha_{k} z^{k}}{\sum_{k=0}^{d} \beta_{k} z^{k}} \in {\rm Rat}_{d},$$
the equality $\tau_{1} \circ \phi = \phi \circ \tau_{1}$ is equivalent to have the equality
$$\dfrac{\sum_{k=0}^{d} -\overline{\beta_{k}} z^{k}}{\sum_{k=0}^{d} \overline{\alpha_{k}} z^{k}}=
\dfrac{\sum_{k=0}^{d} (-1)^{d-k}\alpha_{d-k} z^{k}}{\sum_{k=0}^{d} (-1)^{d-k}\beta_{d-k} z^{k}},$$
which is also equivalent to the existence of some $\lambda \neq 0$ so that 
$$\overline{\beta_{k}}=\lambda (-1)^{d+1-k} \alpha_{d-k}, \;\; 
\overline{\alpha_{k}}=\lambda (-1)^{d-k} \beta_{d-k}.$$

The above, in particular, asserts that 
$1=|\lambda|^{2} (-1)^{d+1}$, so $d$ is odd, $|\lambda|=1$ and $\beta_{k}=\overline{\lambda} (-1)^{k} \overline{\alpha_{d-k}}$. If we set $\lambda=e^{2i\theta}$, then 
$e^{i\theta}\beta_{k}=(-1)^{k} \overline{e^{i\theta} \alpha_{d-k}}$. So if we take $a_{k}=e^{i\theta}\alpha_{k}$ and $b_{k}=e^{i\theta}\beta_{k}$, then we are done.
\end{proof}

\begin{rema}
The rational maps as described in part (2) of  Theorem \ref{teoimaginary} induce dianalytic maps on the real projective plane ${\mathbb P}^{2}({\mathbb R})$. 
The family of rational maps obtained in \cite[Proposition 2.5]{GH} is the subfamily as in the above theorem with $a_{1}=\cdots=a_{d-1}=0$. In the same paper, the authors prove that the bicritical pseudo-real rational maps  are of that form.
\end{rema}

Let ${\mathcal Z}_{d} \subset {\mathbb C}^{d+1}$ be the hypersurface defined by the points $\alpha:=(a_{0},\ldots,a_{d})$  such that 
 the resultant of the polynomials 
$P_{\alpha}(z)=\sum_{k=0}^{d} a_{k} z^{k}$ and $Q_{\alpha}(z)= \sum_{k=0}^{d} (-1)^{k} \overline{a_{d-k}} z^{k}$ is zero.
The quasi-projective variety ${\mathbb C}^{d+1} \setminus {\mathcal Z}_{d}$ is connected (as ${\mathcal Z}_{d}$ has real codimension two \cite{Duma}).

If $\alpha=(a_{0},\ldots,a_{d}) \in {\mathbb C}^{d+1} \setminus {\mathcal Z}_{d}$, then Theorem \ref{teoimaginary} asserts that $\phi_{\alpha} = P_{\alpha}/Q_{\alpha} \in B_{d}(1)$ and that every element of $B_{d}(1)$ is obtained in that way.

\begin{lemm}\label{lemacoro2}
Let $\alpha:=(a_{0},\ldots,a_{d}), \beta:=(b_{0},\ldots,b_{d}) \in {\mathbb C}^{d+1} \setminus {\mathcal Z}_{d}$. Then,  $\phi_{\beta}=\phi_{\alpha}$ if and only if  there is $\lambda \in {\mathbb R}^{*}:={\mathbb R}\setminus\{0\}$ such that $\beta=\lambda \alpha$.
\end{lemm}
\begin{proof}
Equality of the two rational maps is equivalent to the existence of some $\lambda \in {\mathbb C}\setminus\{0\}$ such that $b_{k}=\lambda a_{k}$ and $\overline{b_{k}}=\lambda \overline{a_{k}}$. This obligates to have $\lambda \in {\mathbb R}$.
\end{proof}

Let us denote by $\{a_{0}:\cdots:a_{d}\}$ the equivalence class of $(a_{0},\ldots,a_{d})$ in the quotient space $({\mathbb C}^{d+1} \setminus {\mathcal Z}_{d})/{\mathbb R}^{*}$. This is a connected real manifold, which can be seen as an open dense subset of ${\mathbb P}^{2d+1}({\mathbb R})$, so of real dimension $2d+1$.

All of the above is summarized in the following.

\begin{coro}\label{teoidentifica}
If $d$ is odd, then the map
$$\Xi: ({\mathbb C}^{d+1} \setminus {\mathcal Z}_{d})/{\mathbb R}^{*} \to B_{d}(1):
\{a_{0}:\cdots:a_{d}\} \mapsto \phi(z)=\dfrac{\sum_{k=0}^{d} a_{k} z^{k}}{\sum_{k=0}^{d} (-1)^{k} \overline{a_{d-k}} z^{k}}$$
provides a real analytic bijection. In particular, $B_{d}(1)$ and ${\mathcal B}_{d}(1)$ are connected and  $${\rm dim}_{\mathbb R}(B_{d}(1))=2d+1, \;\; 
{\rm dim}_{\mathbb R}({\mathcal B}_{d}(1))=2d-2.$$
\end{coro}
\begin{proof}
Part (2) of Theorem \ref{teoimaginary} asserts that the map $\Xi$ is well defined and surjective and Lemma \ref{lemacoro2} asserts the injectivity. 
Also, as defined, it can be seen to be real analytic. The real dimension of the space $({\mathbb C}^{d+1} \setminus {\mathcal Z}_{d})/{\mathbb R}^{*}$ is $2(d+1)-1=2d+1$, which is the required dimension for $B_{d}(1)$.

To obtain the dimension of ${\mathcal B}_{d}(1)$, we first observe that generically the elements of $B_{d}(1)$ has exactly one imaginary reflection as automorphism. Then, we only need to obtain the dimension of the ${\rm PSL}_{2}({\mathbb C})$-stabilizer of $\tau_{1}$ (i.e., the M\"obius transformations commuting with $\tau_1$). Direct computations show that these transformations are of the form $$A(z)=\frac{az+b}{-\overline{b}z+\overline{a}}, \; \mbox{where} \; |a|^{2}+|b|^{2}=1.$$
This permits to see that the real dimension of ${\mathcal B}_{d}(1)$ is equal to $(2d+1)-3=2d-2$, as required.
\end{proof}

Next, we proceed to describe $B_{d}^{\mathbb R}(1) \subset B_{d}(1)$. 
As for $d=1$ there are no pseudo-real maps,  we have that $B_{1}^{\mathbb R}(1) = B_{1}(1)$. We only need to take care of the case when $d \geq 3$ is odd.

\begin{theo}\label{conexon=1}
If $d \geq 3$ is odd, then
\begin{enumerate}
\item ${\rm dim}_{\mathbb R}(B_{d}^{\mathbb R}(1))=d+1$.
\item ${\rm dim}_{\mathbb R}({\mathcal B}_{d}^{\mathbb R}(1))=d-2$.
\item $B_{d}(1) \setminus B_{d}^{\mathbb R}(1)$ and ${\mathcal P}_{d}(1)={\mathcal B}_{d}(1) \setminus {\mathcal B}_{d}^{\mathbb R}(1)$ are both connected.
\end{enumerate}
\end{theo}
\begin{proof}
If $\phi \in B_{d}^{\mathbb R}(1)$, then there is a reflection $\rho$ as an antiholomorphic automorphism of it. Let us denote by $\Sigma_{\rho}$ the circle of fixed points of $\rho$.
The circle $\Sigma_{\rho}$ is either: (i) the unit circle $S^{1}$ or (ii) of the form $L \cup \{\infty\}$, where $L \subset {\mathbb C}$ is  an Euclidean line through $0$. In fact, 
if $\Sigma_{\rho}$ is not of the desired form, then the M\"obius transformation $\rho \circ \tau_{1}$ is a loxodromic transformation, which has infinite order, a contradiction.

Next, we proceed to analyze each of the two possibilities given above.

\subsection*{(I) ${\bf \Sigma_{\rho}=S^{1}}$}
In this case, $\rho(z)=1/\overline{z}$; so $T(z)=-z$ is a holomorphic automorphism of $\phi$ and, in particular,  $\phi(z)=z\psi(z^{2})$, for a suitable rational map $\psi \in {\rm Rat}_{r}$, where $r \in \{(d \pm 1)/2\}$. In this case, the map $\psi$ should satisfy the equality
$$(*) \quad \psi(z)=\frac{1}{\overline{\psi}(1/z)}.$$

As $\phi \in B_{d}(1)$, we may assume  
$$\phi(z)=\dfrac{\sum_{k=0}^{d} a_{k} z^{k}}{\sum_{k=0}^{d} (-1)^{k} \overline{a_{d-k}} z^{k}}.$$
Then $(*)$ above is equivalent to have either: (i) $a_{k}=0$ for $k$ odd or (ii) $a_{k}=0$ for $k$ even. In this way, these rational maps corresponds to the real $d$-dimensional sublocus 
${\mathcal A}$ of $W:=({\mathbb C}^{d+1} \setminus {\mathcal Z}_{d})/{\mathbb R}^{*}$ (as in Corollary \ref{teoidentifica}), where 
${\mathcal A}$ is the following set
$$ \{\{0:a_{1}:0:a_{3}:\cdots:0:a_{d}\} \in W\} \cup 
 \{\{a_{0}:0:a_{2}:\cdots:0:a_{d-1}:0\} \in W\}.$$

\subsection*{(II) $\Sigma_{\rho}$ is an Euclidean line through ${\bf 0}$ union ${\bf \infty}$}
In this situation, $\rho(z)=e^{2\alpha i} \overline{z}$. Again writing $\phi$ as above, the equality $\phi \circ \rho = \rho \circ \phi$ ensures that there is some $\mu \neq 0$ so that
$$\overline{a_{k}}=\mu a_{k} e^{2(k-1)\alpha i} \; \; \mbox{and} \;\;
a_{d-k}=\mu e^{2(\theta+k\alpha) i}\overline{a_{d-k}},$$
from where we obtain that $|\mu|=1$.
If $a_{0} \neq 0$, then the above (for $k=0,d$) asserts that 
$$\mu=e^{((1-d)\alpha-\theta)i} \;\; \mbox{ and } \;\;
a_{k}=|a_{k}| e^{((d+1-2k)\alpha+\theta)i/2}.$$

So the only free parameters are $r_{k}=|a_{k}|$, $k=0,...,d$. We obtain in this way a sublocus of real dimension $d+1$ (up to projectivization we may assume $r_{0}=1$) inside $B_{d}(1)$. The situation is similar for $a_{d} \neq 0$. 

All the above permits to obtain the desired dimensions as stated in (1) and (2) in our theorem. In order to obtain part (3), we only need to observe that 
as $B_{d}^{\mathbb R}(1)$ (respectively, ${\mathcal B}_{d}^{\mathbb R}(1)$) has codimension $d \geq 3$ in $B_{d}(1)$ (respectively, ${\mathcal B}_{d}(1)$), then the complement is connected (see, for instance, \cite[Corollary 0.3.11]{DV}).
\end{proof}

\begin{coro}\label{coro3}
If $d \geq 3$ is odd, then the locus in ${\rm M}_{d}$ of classes of pseudo-real rational maps which commute with an
imaginary reflection is connected and is of real dimension $2d-2$.
\end{coro}

%%%%%%%%%
\subsection*{The case of ${\bf n \geq 3}$ odd}
In this case, as $\tau_{n}^{n}=\tau_{1}$, we obtain that ${\mathcal B}_{d}(n) \subset {\mathcal B}_{d}(1)$ and 
${\mathcal B}_{d}^{\mathbb R}(n) \subset {\mathcal B}^{\mathbb R}_{d}(1)$.

%%%%%%%%%%%
\subsection*{The case of ${\bf n \geq 2}$ even}
In this case, $T(z)=\tau_{n}^{2}(z)=\omega_{n}z$. From Theorem \ref{teociclico}, $\phi(z)=z\psi(z^{n})$ for a suitable rational map$$\psi(z)=\dfrac{\sum_{k=0}^{r} a_{k} z^{k}}{\sum_{k=0}^{r} b_{k} z^{k}} \in {\rm Rat}_{r},$$ where $r \in \{(d-1)/n, d/n, (d+1)/n\}$ and 
\begin{enumerate}
\item $a_{r}b_{0} \neq 0$, if $r=(d-1)/n$;
\item $a_{r} \neq 0$ and $b_{0}=0$, if $r=d/n$;
\item $a_{r}=b_{0}=0$ and $b_{r} \neq 0$, if $r= (d+1)/n$.
\end{enumerate}

The equality $\tau_{n} \circ \phi = \phi \circ \tau_{n}$ is equivalent to have
$\psi(-1/z)=1/\overline{\psi}(z),$
which is equivalent to 
 the existence of some $\lambda \neq 0$ so that
$\overline{b_{k}}=\lambda (-1)^{r-k} a_{r-k},\;\;
\overline{a_{k}}=\lambda (-1)^{r-k} b_{r-k}.$
In particular, $|\lambda|=1$,  $r \geq 2$ is even and 
$b_{k}=\overline{\lambda} (-1)^{k} \overline{a_{r-k}}.$

The above asserts that case (2) is not possible.
Now, as $n \geq 2$ and $r \geq 2$ are even, the above also asserts the following.
\begin{enumerate}
\item If $r=(d-1)/n$, then $d \equiv 1 \mod(4)$ and $a_{r} \neq 0$.
\item if $r= (d+1)/n$, then $d \equiv 3 \mod(4)$ and $a_{r}=0$ and $a_{0}\neq 0$.
\end{enumerate}

\begin{rema}
As $n \geq 2$ does not divides $d$, only one of the values $(d-1)/n$ or $(d+1)/n$ can be even. For instance, if $d=3$, then $(d-1)/n=2/n$ cannot be even and $(d+1)/n=4/n$ is even only for $n=2$.
\end{rema}

Summarizing all the above is the following.

 \begin{theo}\label{teo9}
 Let $n,d \geq 2$ be integers with  $n$ even. 
\begin{enumerate}
\item  If $\phi \in {\rm Rat}_{d}$ admits an antiholomorphic automorphism $\tau$ of order $2n$, then $d \geq 3$ is odd and either of both,  $(d-1)/n$ or $(d+1)/n$, is an even integer, but not both of them. Moreover, if $r \in \{(d-1)/n, (d+1)/n\}$ is the even integer, then $\phi$ can be conjugated to a rational map of the form $\widehat{\phi}(z)=z\psi(z^{n})$, with $$\psi(z)=\dfrac{\sum_{k=0}^{r} a_{k} z^{k}}{\sum_{k=0}^{r} (-1)^{k}  \overline{a_{r-k}} z^{k}} \in {\rm Rat}_{r},$$ 
 where 
 \begin{itemize}
 \item[(i)] $a_{r} \neq 0$, for $r=(d-1)/n$; and
 \item[(ii)] $a_{r}=0$ and $a_{0} \neq 0$, for $r=(d+1)/n$.
 \end{itemize}
 In both cases, $\tau$ corresponds to $\tau_{n}$ (as previously defined in Section \ref{Sec:antiauto}).
\item Assume that $d \geq 3$ is odd and that either of both, $(d-1)/n$ or $(d+1)/n$, is an even integer, but not both of them. If  
$r \in \{(d-1)/n, (d+1)/n\}$ is the even integer, then the rational map $\widehat{\phi}(z)=z \psi(z^{n})$ (as in (1), together the provided conditions on the coefficients) admits the antiholomorphic automorphism $\tau_{n}$ of order $2n$.
  \end{enumerate}
\end{theo}

\begin{rema}\label{notita}
Let  $d \geq 3$ be odd and $n=2^{s}$, where $s \geq 1$ is such that one of the values $(d-1)/n$ or $(d+1)/n$ is an even integer but not both of them.
If we take all the coefficients $a_{k} \in {\mathbb R}$, then $\widehat{\phi}(z)=z \psi(z^{n})$ is real and admits an antiholomorphic automorphism of order $2n=2^{s+1}$, that is, ${\mathcal B}_{d}(2^{s}) \cap {\rm M}_{d}^{\mathbb R} \neq \emptyset$. Now, for the case $s=0$, the last fact holds as ${\mathcal B}_{d}^{\mathbb R}(1) \neq \emptyset$ (by Theorem \ref{conexon=1}).
\end{rema}

\begin{exam}[Antiholomorphic automorphisms in degree $d=3$]
Let $\phi \in {\rm Rat}_{3}$ admitting an antiholomorphic automorphism of order $2n$, where $n \geq 2$.
Theorem \ref{teo9} asserts that the only possibility is $n=2$ (case (ii) of (1) and $r=2$) and,
up to conjugation, we may assume that  
$$\phi(z)=\frac{a+bz^{2}}{-\overline{b}z+\overline{a} z^{3}}; \quad a \neq 0.$$

In this case, the antiholomorphic automorphism is given by $\tau_{2}(z)=i / \overline{z}$. As $a \neq 0$, we may assume $a=1$. 
It follows that $\phi$ is equivalent to a rational map of the form
$$\widehat{\phi}(z)=\frac{1+c z^{2}}{-\overline{c}z+z^{3}}.$$

In this way, a rational map of degree $d=3$ only may admits antiholomorphic automorphisms of order $2$ or $4$. By Corollary \ref{teoidentifica}, the locus
in ${\rm M}_{3}$ of (classes of) rational maps admitting antiholomorphic automorphisms of order $2$ is connected of real dimension $4$ and, by the previous computations, those admitting one of order $4$ is connected of real dimension $2$.
\end{exam}

%%%%%%%%%%%%%%%%%%%
We may proceed in a similar fashion as done for Corollary \ref{teoidentifica} to obtain 
the real dimension of the locus  ${\mathcal B}_{d}(n)$, for $n \geq 2$ even.

As before, ${\mathcal Z}_{r} \subset {\mathbb C}^{r+1}$ is the hypersurface defined by the points $\alpha:=(a_{0},\ldots,a_{r})$  such that 
 the resultant of the polynomials 
$P_{\alpha}(z)=\sum_{k=0}^{r} a_{k} z^{k}$ and $Q_{\alpha}(z)= \sum_{k=0}^{r} (-1)^{k} \overline{a_{d-k}} z^{k}$ is zero, and 
the quasi-projective variety ${\mathbb C}^{r+1} \setminus {\mathcal Z}_{r}$ is a connected real manifold.

If $\alpha=(a_{0},\ldots,a_{r}) \in {\mathbb C}^{r+1} \setminus {\mathcal Z}_{d}$, then Theorem \ref{teo9} asserts that $\phi_{\alpha} = P_{\alpha}/Q_{\alpha} \in B_{d}(n)$ and that every element of $B_{d}(n)$ is obtained in that way. Moreover, by Lemma  \ref{lemacoro2}, 
for $\alpha=(a_{0},\ldots,a_{r}), \beta=(b_{0},\ldots,b_{r}) \in {\mathbb C}^{r+1} \setminus {\mathcal Z}_{d}$, one has that  $\phi_{\beta}=\phi_{\alpha}$ if and only if  there is $\lambda \in {\mathbb R}^{*}:={\mathbb R}\setminus\{0\}$ such that $\beta=\lambda \alpha$.
Let us denote by $\{a_{0}:\cdots:a_{r}\}$ the equivalence class of $(a_{0},\ldots,a_{r})$ in the connected real manifold $({\mathbb C}^{r+1} \setminus {\mathcal Z}_{d})/{\mathbb R}^{*}$of real dimension $2r+1$.
%%%%%%%%%%%%%%%%%%%

\begin{coro}\label{conexidad}
Let $n \geq 2$ be even and $d \geq 3$ odd so that either $(d-1)/n$ or $(d+1)/n$ is even.
\begin{enumerate}
\item If $r=(d-1)/n$ is even, then the map 
$$({\mathbb C}^{r+1} \setminus {\mathcal Z}_{d})/{\mathbb R}^{*} \to B_{d}(n):
\{a_{0}:\cdots:a_{r}\} \mapsto \widehat{\phi}(z)= z \psi(z^{n})=z \dfrac{\sum_{k=0}^{r} a_{k} z^{kn}}{\sum_{k=0}^{r} (-1)^{k} \overline{a_{r-k}} z^{kn}}$$
provides a real analytic bijection.
In particular, $B_{d}(n)$ and ${\mathcal B}_{d}(n)$ are both connected and
$$
{\rm dim}_{\mathbb R}(B_{d}(n))=2r+1, \;\; 
{\rm dim}_{\mathbb R}({\mathcal B}_{d}(n))=2r.
$$ 

\item If $r=(d+1)/n$ is even, then the map 
$$({\mathbb C}^{r} \setminus {\mathcal Z}_{d})/{\mathbb R}^{*} \to B_{d}(n):
\{a_{0}:\cdots:a_{r-1}\} \mapsto \widehat{\phi}(z)= z \psi(z^{n})= z \dfrac{\sum_{k=0}^{r-1} a_{k} z^{kn}}{\sum_{k=1}^{r} (-1)^{k} \overline{a_{r-k}} z^{kn}}$$
provides a real analytic bijection.
In particular, $B_{d}(n)$ and ${\mathcal B}_{d}(n)$ are both connected and $${\rm dim}_{\mathbb R}(B_{d}(n))=2r-1, \;\; {\rm dim}_{\mathbb R}({\mathcal B}_{d}(n))=2r-2.$$ 
\end{enumerate}
\end{coro}

\begin{coro}\label{coron=2}
If $d \geq 3$ is odd, then there are pseudo-real rational maps of degree $d$ admitting an antiholomorphic automorphism of order $4$. 
\end{coro}
\begin{proof}
Write $d=2l+1$, where $l \geq 1$. If $l$ is odd, then we take $r=(d+1)/2$ and if $l$ is even, then we take $r=(d-1)/2$ in Theorem \ref{teo9} (where we are taking $n=2$). By the previous Corollary, ${\mathcal B}_{d}(2)$ has positive real dimension.  Now, we only need to observe that the real part ${\mathcal B}_{d}^{\mathbb R}(2)$ has dimension at most $r-1$.
\end{proof}

\begin{coro}\label{realconexo}
The locus ${\rm M}_{d}({\mathbb R})$ of real points  in moduli space ${\rm M}_{d}$ is connected.
\end{coro}
\begin{proof}
If $d \geq 2$ is even, then ${\rm M}_{d}({\mathbb R})={\rm M}_{d}^{\mathbb R}$, so it is connected (Lemma \ref{Md(R)conexo}).
In the case that $d \geq 3$ is odd, 
${\rm M}_{d}({\mathbb R})$ is the union of the loci ${\rm M}_{d}^{\mathbb R}$ and others of the form ${\mathcal B}_{d}(2^{s})$. Since (i) each of them is connected and (ii) ${\rm M}_{d}^{\mathbb R} \cap {\mathcal B}_{d}(2^{s}) \neq \emptyset$ (Remark \ref{notita}), we obtain the connectedness.
\end{proof}

\begin{rema}
As a consequence of Theorem \ref{teo9}, there is no rational map of degree $d \in \{3,5\}$ admitting an antiholomorphic automorphism of order $8$ and, for $d=7$, there is a family of real dimension $2$ with such a property.
\end{rema}

%%%%%%%%%%%%%%%%%%%%%%%
\section{Pseudo-real rational maps}\label{Sec:main}
\subsection{Automorphisms of pseudo-real rational maps}
We start with a simple observation about  the group of holomorphic automorphisms of  a pseudo-real rational map.

The finite groups of M\"obius transformations are either: the trivial group, cyclic groups, dihedral groups or one of the Platonic groups ${\mathcal A}_{4}$, ${\mathcal A}_{5}$ or ${\mathfrak S}_{4}$ \cite{Beardon}. The quotient orbifold $\widehat{\mathbb C}/G$, where $G$ is any of these finite groups, can be identified with $\widehat{\mathbb C}$ and it has at most three   cone points (exactly three when $G$ is neither trivial or cyclic).

\begin{theo}\label{autopseudo}
If $\phi \in {\rm Rat}_{d}$ is a pseudo-real rational map, then  ${\rm Aut}(\phi)$ is either trivial or a cyclic group. 
\end{theo}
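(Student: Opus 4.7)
The plan is to argue by contradiction. Suppose $G := \mathrm{Aut}(\phi)$ is non-cyclic, so after a M\"obius conjugation $G$ is isomorphic to one of $D_n$ ($n \geq 2$), $T$, $O$, or $I$ inside $\mathrm{SO}(3)$. Since $G$ is finite and $\widehat{\mathrm{Aut}}(\phi)$ contains $G$ with index two, $\widehat{\mathrm{Aut}}(\phi)$ is itself finite. A standard averaging argument lets us further conjugate so that $\widehat{\mathrm{Aut}}(\phi) \subset \mathrm{O}(3)$, viewed as the round-sphere isometry group inside $\widehat{\mathrm{PSL}}_{2}(\mathbb{C})$, with $G = \widehat{\mathrm{Aut}}(\phi) \cap \mathrm{SO}(3)$ and the imaginary reflection $\sigma(z) = -1/\overline{z}$ identified with the antipodal map $-I \in \mathrm{O}(3)$. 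Any antiholomorphic element $\eta \in \widehat{\mathrm{Aut}}(\phi)$ then factors uniquely as $\eta = \sigma \cdot R$ with $R \in \mathrm{SO}(3)$.

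The pivotal observation is that $\eta g = \sigma \cdot Rg$ is a reflection (fixes a circle on $\widehat{\mathbb{C}}$) if and only if $Rg$ is a nontrivial involution of $\mathrm{SO}(3)$, because $\sigma \cdot h$ has $\mathrm{O}(3)$-spectrum $(-1,+1,+1)$ precisely when $h$ is a $180^{\circ}$-rotation. Since $\eta^{2} = R^{2}$ lies in $G$, the element $R$ sits in $N := N_{\mathrm{SO}(3)}(G)$ and represents an involution in $N/G$. Hence producing a reflection inside $\widehat{\mathrm{Aut}}(\phi)$ reduces to finding $g \in G$ such that $Rg$ is a $180^{\circ}$-rotation; equivalently, to showing that the coset $RG$ contains a nontrivial involution.

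The remaining task is a case check using the classification of non-cyclic finite subgroups of $\mathrm{SO}(3)$. For $G \in \{D_n\ (n \geq 3),\ T,\ O,\ I\}$ the quotient $N/G$ has order at most two, and the required $180^{\circ}$-rotations are immediate from the standard presentations: the equatorial $2$-folds of $D_{2n}$ in the dihedral case, the edge $2$-folds of the cube in $O \setminus T$, and $G$ itself in the maximal cases $O$ and $I$. The only delicate case is $G = D_2$, where $N = O$ and $N/G \cong {\mathfrak S}_3$: among the four cosets of $D_2$ in $O$ whose square lies in $D_2$ (the identity coset and the three transposition cosets), each contains an $\mathrm{SO}(3)$-involution (the three face $2$-folds of $D_2$, respectively the edge $2$-folds of the cube in the transposition cosets). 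Obtaining such an involution $h = Rg_0$ gives $\eta g_0 = \sigma h \in \widehat{\mathrm{Aut}}(\phi)$ as a reflection, which by \cite{Hidalgo} forces $\phi$ to be real, contradicting pseudo-reality.

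I expect the main obstacle to be the $G = D_2$ case, because its normalizer in $\mathrm{SO}(3)$ is the full octahedral group and $O/D_2$ is itself non-cyclic, so one has to track precisely which cosets correspond to involutions in $\mathfrak{S}_3$ and then exhibit a $180^{\circ}$-rotation in each. All other non-cyclic $G$ reduce to one-line inspections of their standard rotation axes.
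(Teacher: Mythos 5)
Your proof is correct, but it takes a genuinely different route from the paper's. You linearize: after averaging, $\widehat{\rm Aut}(\phi)\subset {\rm O}(3)$, every antiholomorphic automorphism factors as the antipodal map $\sigma$ times a rotation $R$, a product $\sigma h$ is a reflection exactly when $h$ is a half-turn, and since $R$ normalizes $G={\rm Aut}(\phi)$ with $R^{2}\in G$ you reduce to checking, case by case over the non-cyclic finite rotation groups (including the delicate $G=D_{2}$ with $N_{{\rm SO}(3)}(G)=O$ and $N/G\cong{\mathfrak S}_{3}$), that the coset $RG$ contains a half-turn; this manufactures a reflection inside $\widehat{\rm Aut}(\phi)$ and contradicts pseudo-reality via \cite{Hidalgo}. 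The paper instead passes to the quotient: for non-cyclic $G$ the branched regular cover $\pi:\widehat{\mathbb C}\to\widehat{\mathbb C}$ has exactly three branch values, the induced anticonformal involution $\widehat{Q}$ must fix one of them, hence has fixed points and is a reflection, and then some lift $Q\circ g$ fixes a continuum of points of $\pi^{-1}(\Sigma_{\widehat{Q}})$ and so is itself a reflection, giving the same contradiction. The orbifold argument is shorter and uniform, since "three branch values" is precisely the property separating the non-cyclic groups from the cyclic and trivial ones, so no normalizer bookkeeping is needed; your argument is more elementary and explicit, staying inside ${\rm O}(3)$, using only the classification of finite rotation groups and their normalizers, and actually exhibiting which element of the coset $Q\cdot{\rm Aut}(\phi)$ is the reflection. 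Both proofs bottom out at the same point: a pseudo-real map cannot admit a reflection as automorphism.
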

\begin{proof}
As $\phi \in {\rm Rat}_{d}$ is a pseudo-real rational map, $d \geq 3$ is odd. In particular, $G^{+}={\rm Aut}(\phi)<{\rm PSL}_{2}({\mathbb C})$ is a finite group of M\"obius transformations.
Let us assume $G^{+}$ is neither the trivial one or a cyclic group, so the quotient orbifold $\widehat{\mathbb C}/G^{+}$ can be identified with $\widehat{\mathbb C}$ and the three cone points with $\infty, 0$ and $1$. Let us consider a regular branched covering $\pi:\widehat{\mathbb C} \to \widehat{\mathbb C}$ with $G^{+}$ as its deck group. Let us denote by $\pi_{0}:\widehat{\mathbb C} \setminus \pi^{-1}(\{0,1,\infty\}) \to \widehat{\mathbb C}\setminus \{0,1,\infty\}$ the restriction of $\pi$, which turns out to be a regular covering map with deck group $G^{+}$. As $G=\widehat{\rm Aut}(\phi)$ normalizes $G^{+}$, there is an antiholomorphic automorphism $\widehat{Q}$, of order two,  such that  $\pi \circ Q=\widehat{Q} \circ \pi$, for every $Q \in G \setminus G^{+}$.
 As $\widehat{Q}$ must keep invariant the branch locus $\{0,1,\infty\}$ and it has order two, it must have a fixed point; so it is a reflection. This implies that $\widehat{Q}$ has infinitely many fixed points different from the branch values of $\pi$. Let $z \in {\mathbb C}\setminus\{0,1\}$ be a fixed point of $\widehat{Q}$ and let $w \in \widehat{\mathbb C}$ such that $\pi(w)=z$. If $Q \in G\setminus G^{+}$, then $Q(w)$ and $w$ are in the same $G^{+}$-orbit. It follows that there is some $T \in G^{+}$ such that 
 $Q \circ T$ fixes $w$. The only possibility of an extended M\"obius transformation (of finite order) to have fixed points is for it to be a reflection, a contradiction (pseudo-real rational maps have no reflections as automorphisms).
\end{proof}

%%%%%%%%%%%%%%%%%%%%%%
\subsection{A characterization of pseudo-real rational maps}
Next, a characterization of a pseudo-real rational map, in terms of the different possibilities for its group of holomorphic automorphisms, is provided.

\subsubsection{\bf Trivial group of automorphisms}
If there is no non-trivial holomorphic automorphism, then
 the pseudo-real property is easy to state as already observed by Silverman \cite{Silv1,Silv4} (for completeness, a proof is provided).

\begin{theo}[\cite{Silv1}]\label{autotrivial}
Let $\phi \in {\rm Rat}_{d}$, $d \geq 3$ odd, such that ${\rm Aut}(\phi)$ is trivial. Then $\phi$ is pseudo-real if and only if there is an imaginary reflection that is an automorphism of $\phi$.
\end{theo}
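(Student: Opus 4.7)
The statement is an iff with a trivial automorphism hypothesis, so both directions reduce to playing off the two kinds of order-two extended M\"obius transformations (reflections versus imaginary reflections) against the rigidity of having $\mathrm{Aut}(\phi)=\{\mathrm{id}\}$. The plan is to handle each direction separately, and in both cases the engine is the fact that the square of any antiholomorphic automorphism of $\phi$ is a holomorphic automorphism.

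For the ``if'' direction, I would assume there is an imaginary reflection $\tau\in\widehat{\mathrm{Aut}}(\phi)$. Then $[\phi]$ is a real point, so $\phi$ is either real or pseudo-real. To exclude the real case, suppose for contradiction that some reflection $R\in\widehat{\mathrm{Aut}}(\phi)$. Both $R$ and $\tau$ are antiholomorphic, so $R\circ \tau\in\mathrm{PSL}_2(\mathbb{C})\cap\widehat{\mathrm{Aut}}(\phi)=\mathrm{Aut}(\phi)=\{\mathrm{id}\}$. Hence $R=\tau^{-1}=\tau$, contradicting that $R$ has fixed points on $\widehat{\mathbb{C}}$ while $\tau$ does not. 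Therefore $\phi$ admits no reflection as antiholomorphic automorphism, and by the fact recalled in Section~\ref{Sec:prelim} (from \cite{Hidalgo}) this forces $\phi$ to be pseudo-real.

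For the ``only if'' direction, assume $\phi$ is pseudo-real with $\mathrm{Aut}(\phi)$ trivial. By definition of pseudo-real, there exists some $Q\in\widehat{\mathrm{Aut}}(\phi)\setminus\mathrm{Aut}(\phi)$, an extended M\"obius transformation. Its square $Q^{2}$ is a holomorphic automorphism of $\phi$, so $Q^{2}\in\mathrm{Aut}(\phi)=\{\mathrm{id}\}$. Thus $Q$ has order two, which means $Q$ is either a reflection or an imaginary reflection. The reflection case is impossible because $\phi$ is not real. Hence $Q$ is an imaginary reflection automorphism of $\phi$, as required.

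There is really no ``hard step'' here; the only thing worth being careful about is the identification $\widehat{\mathrm{Aut}}(\phi)\cap\mathrm{PSL}_2(\mathbb{C})=\mathrm{Aut}(\phi)$, which is immediate from the definitions, and the classification of order-two elements of $\widehat{\mathrm{PSL}}_2(\mathbb{C})\setminus\mathrm{PSL}_2(\mathbb{C})$ into reflections and imaginary reflections, which is standard and was already recalled in Section~\ref{Sec:prelim}. The novelty of the statement is conceptual rather than technical: triviality of $\mathrm{Aut}(\phi)$ collapses the whole antiholomorphic part of $\widehat{\mathrm{Aut}}(\phi)$ to at most one involution, and pseudo-reality then pins down its geometric type.
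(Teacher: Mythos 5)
Your proposal is correct and follows essentially the same route as the paper: the square of an antiholomorphic automorphism lands in the trivial group $\mathrm{Aut}(\phi)$, forcing an order-two antiholomorphic element, and the composition of a reflection with an imaginary reflection would produce a nontrivial holomorphic automorphism (your $R=\tau$ contradiction is just the contrapositive phrasing of the paper's ``$R\circ Q\neq I$'' remark). The only cosmetic slip is the attribution at the end of the ``if'' direction: there you need the elementary implication ``real $\Rightarrow$ admits a reflection,'' not the result of \cite{Hidalgo}, which is the converse and is what you correctly use in the ``only if'' direction.
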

\begin{proof}
If $\phi$ is pseudo-real, then necessarily there is some $Q \in \widehat{\rm Aut}(\phi)$ that is antiholomorphic. Since ${\rm Aut}(\phi)$ is trivial, then $Q^{2}=I$. Since $\phi$ is not real, $Q$ cannot be a reflection; so it is an imaginary reflection. In the other direction, if there is some imaginary reflection $Q \in \widehat{\rm Aut}(\phi)$, then there is no reflection $R \in \widehat{\rm Aut}(\phi)$ as, in such a case, $R \circ Q \in {\rm Aut}(\phi)$ is different from the identity.
\end{proof}

As every imaginary reflection can be conjugated, by a suitable M\"obius transformation, to $\tau(z)=-1/\overline{z}$, the previous result can also be stated as follows (see also \cite[section 2]{Filom}).

\begin{coro}
Let $\phi \in {\rm Rat}_{d}$, $d \geq 3$ odd, so that ${\rm Aut}(\phi)$ is trivial. Then $\phi$ is pseudo-real if and only if there is a M\"obius transformation $T \in {\rm PSL}_{2}({\mathbb C})$ so that $\eta=T \circ \phi \circ T^{-1}$ satisfies that
$\eta(-1/\overline{z})=-1/\overline{\eta(z)}$.
\end{coro}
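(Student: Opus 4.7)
The corollary is essentially a direct translation of Theorem \ref{autotrivial} via the classification of imaginary reflections up to M\"obius conjugacy. The plan is to unpack what it means for an imaginary reflection to be an automorphism of $\phi$, and then normalize that reflection to the standard form $\tau(z)=-1/\overline{z}$ by a M\"obius change of coordinate.

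First I would observe that if $\eta = T\circ\phi\circ T^{-1}$ for some $T\in{\rm PSL}_{2}({\mathbb C})$, then $\phi$ and $\eta$ are $\widehat{\rm PSL}_{2}({\mathbb C})$-equivalent, hence $\phi$ is pseudo-real if and only if $\eta$ is, and moreover ${\rm Aut}(\eta) = T\,{\rm Aut}(\phi)\,T^{-1}$ is trivial whenever ${\rm Aut}(\phi)$ is. So the statement is invariant under replacing $\phi$ by any conjugate $\eta$. This lets me freely pass between $\phi$ and $\eta = T\circ\phi\circ T^{-1}$.

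Next, I would apply Theorem \ref{autotrivial}: under the hypothesis that ${\rm Aut}(\phi)$ is trivial, $\phi$ is pseudo-real if and only if there exists an imaginary reflection $Q$ with $Q\circ\phi\circ Q^{-1}=\phi$. Since every imaginary reflection of $\widehat{\mathbb C}$ is conjugate to $\tau(z)=-1/\overline{z}$, there is some $T\in{\rm PSL}_{2}({\mathbb C})$ with $T\circ Q\circ T^{-1}=\tau$, equivalently $Q=T^{-1}\circ\tau\circ T$. Substituting into $Q\circ\phi\circ Q=\phi$ (using $Q^{2}=\mathrm{id}$) and conjugating both sides by $T$ gives
\[
\tau\circ\eta\circ\tau=\eta,\qquad \eta=T\circ\phi\circ T^{-1}.
\]
Since $\tau^{-1}=\tau$, this is equivalent to $\eta\circ\tau=\tau\circ\eta$, i.e.\ $\eta(-1/\overline{z})=-1/\overline{\eta(z)}$, yielding the forward direction.

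For the converse, suppose $T$ is a M\"obius transformation such that $\eta=T\circ\phi\circ T^{-1}$ satisfies $\eta(-1/\overline{z})=-1/\overline{\eta(z)}$, i.e.\ $\tau\circ\eta\circ\tau=\eta$. Then $Q:=T^{-1}\circ\tau\circ T$ is an imaginary reflection satisfying $Q\circ\phi\circ Q=\phi$, so $Q\in\widehat{\rm Aut}(\phi)$ is an antiholomorphic automorphism which is an imaginary reflection. Theorem \ref{autotrivial} then guarantees that $\phi$ is pseudo-real. There is no genuine obstacle here; the only thing to keep straight is the bookkeeping of the conjugation when moving between $Q$ and $\tau$ (and using $Q^{2}=\tau^{2}=\mathrm{id}$ so one does not have to distinguish $Q$ from $Q^{-1}$).
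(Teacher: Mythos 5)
Your argument is correct and is exactly the paper's (implicit) justification: the corollary is a restatement of Theorem \ref{autotrivial} using the fact that every imaginary reflection is M\"obius-conjugate to $\tau(z)=-1/\overline{z}$, with the conjugation bookkeeping you spell out. Nothing further is needed.
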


%%%%%%%%%%%%%%%%%%%%%%%%%%%
\subsubsection{\bf Non-trivial cyclic group of automorphisms}
If ${\rm Aut}(\phi)=\langle T \rangle \cong {\mathbb Z}_{n}$, where $n \geq 2$, then we may assume, up to conjugation by a suitable M\"obius transformation, that $\phi(z)=z\psi(z^{n})$, for a suitable rational map $\psi$, and $T(z)=\omega_{n} z$, where $\omega_{n}=e^{2 \pi i/n}$ \cite{MSW} (see also Section \ref{Sec:auto}). Using this normalization, the following description is provided.

\begin{theo}\label{teo7}
Let $\phi \in {\rm Rat}_{d}$, $d \geq 3$ odd, with ${\rm Aut}(\phi) \cong {\mathbb Z}_{n}$ for $n \geq 2$. Then $\phi$ is pseudo-real if and only if 
it can be conjugated to a rational map $\widetilde{\phi}(z)=z \psi(z^{n})$ so that 
\begin{enumerate}
\item $\psi(z) \neq \overline{\psi}(e^{i\theta}z)$, for every $\theta \in {\mathbb R}$, and

\item  there is some $\alpha \in {\mathbb C}\setminus\{0\}$ with  $|\alpha|=1$ and $\alpha^{n} \neq 1$, so that
$\psi(z)\overline{\psi}(\alpha^{n}/z)=1$.
\end{enumerate}
\end{theo}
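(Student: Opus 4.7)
The plan is to work with the normal form recalled in Section \ref{Sec:auto}: after conjugating we may assume $\phi(z)=z\psi(z^{n})$ and ${\rm Aut}(\phi)=\langle T\rangle$ with $T(z)=\omega_{n}z$. The key observation is that any antiholomorphic automorphism $Q\in\widehat{\rm Aut}(\phi)$ normalizes $\langle T\rangle$, hence preserves the fixed-point set $\{0,\infty\}$ of $T$. So either (A) $Q(z)=\mu\overline{z}$ or (B) $Q(z)=\mu/\overline{z}$ for some $\mu\in\mathbb{C}^{*}$.

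In case (A), $Q^{2}(z)=|\mu|^{2}z$ must lie in $\langle T\rangle=\langle\omega_{n}\rangle$, and since the only positive real $n$-th root of unity is $1$, we get $|\mu|=1$, so $Q$ is automatically a reflection. A direct substitution in $Q\circ\phi=\phi\circ Q$, after cancelling the factor $\mu\overline{z}$, collapses the automorphism condition to $\psi(\mu^{n}w)=\overline{\psi}(w)$, equivalently $\psi(w)=\overline{\psi}(\mu^{-n}w)$. In case (B), the analogous computation yields the relation $\psi(z)\overline{\psi}(\mu^{n}/z)=1$. Conjugation by a scalar $S(z)=cz$ preserves the normal form (replacing $\psi(w)$ by $\psi(w/c^{n})$) and transforms $\mu$ into $|c|^{2}\mu$; choosing $|c|^{2}=1/|\mu|$ we reduce to $\mu=\alpha=e^{i\varphi}$ with $|\alpha|=1$. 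Finally, an element $T^{m}Q=\omega_{n}^{m}\alpha/\overline{z}$ of the coset $\langle T\rangle Q$ has a fixed point iff $\omega_{n}^{m}\alpha$ is a positive real, which (since it has modulus one) happens iff $\alpha=\omega_{n}^{-m}$, iff $\alpha^{n}=1$.

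Both implications then drop out. For $(1)\Rightarrow(2)$: pseudo-reality rules out case (A) (every such $Q$ would be a reflection), so case (B) occurs, and after the scalar normalization we obtain a conjugate $\widetilde{\phi}(z)=z\psi(z^{n})$ satisfying the equation of (b); pseudo-reality further forces $\alpha^{n}\neq 1$, giving (b) in full, and simultaneously rules out any case-(A) automorphism, which is precisely condition (a). For $(2)\Rightarrow(1)$: condition (b) directly produces the antiholomorphic automorphism $Q(z)=\alpha/\overline{z}$, so $\phi\sim\overline{\phi}$; the coset $\langle T\rangle Q$ contains no reflection since $\alpha^{n}\neq 1$; and (a) rules out any automorphism of the form $\mu\overline{z}$, which is the only other possibility for an antiholomorphic automorphism by the normalizer argument. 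Hence $\phi$ admits an antiholomorphic automorphism but no reflection, i.e., it is pseudo-real.

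The main obstacle I expect to handle carefully is the scalar normalization: one must verify that the conjugation $S(z)=cz$ simultaneously preserves the normal form, reduces $|\mu|$ to $1$, and leaves the pseudo-real/real dichotomy encoded in the intrinsic condition $\alpha^{n}\neq 1$ rather than in some chosen value of $\alpha$. A subsidiary technical point is that in case (A) the conclusion $|\mu|=1$ is forced regardless of the parity of $n$, because $\langle\omega_{n}\rangle$ meets the positive real axis only at $1$; this fact is used in both directions of the equivalence.
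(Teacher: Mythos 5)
Your proposal is correct and follows essentially the same route as the paper's proof: reduce to the normal form $\phi(z)=z\psi(z^{n})$, observe that any antiholomorphic automorphism preserves $\{0,\infty\}$, and split into the two cases $\mu\overline{z}$ (forced to be a reflection, equivalent to the failure of condition (a)) and $\mu/\overline{z}$ (equivalent to condition (b) after the scalar normalization $|\mu|=1$), with pseudo-reality encoded by $\alpha^{n}\neq 1$. Your treatment of the scalar normalization and of the fixed-point criterion for the coset $\langle T\rangle Q$ just makes explicit steps the paper leaves implicit.
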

\begin{proof}
Let us now consider a rational map $\phi \in {\rm Rat}_{d}$, $d \geq 3$ odd, with ${\rm Aut}(\phi) \cong {\mathbb Z}_{n}$, $n \geq 2$. Up to equivalence, we may assume that $\phi(z)=z\psi(z^{n})$, where $\psi$ is some rational map of degree $r \geq 1$,
${\rm Aut}(\phi)=\langle T(z)=\omega_{n} z\rangle \cong {\mathbb Z}_{n}$, where $\omega_{n}=e^{2 \pi i/n}$. Let us recall that the rational map $\phi$ is pseudo-real if and only if it admits an antiholomorphic automorphism and it has no reflections as automorphisms. Let $Q$ be an antiholomorphic automorphism of $\phi$. As $Q^{2} \in {\rm Aut}(\phi)$, we must have that $Q$ keeps invariant the set $\{\infty,0\}$. In this way, we must have one of  two possibilities: (1) $Q(0)=0$ and $Q(\infty)=\infty$ or (2) $Q(0)=\infty$  and $Q(\infty)=0$.

In case (1), $Q(z)=\beta \overline{z}$, where $|\beta|=1$, is a reflection, and in this case, 
$\phi$ is real, and $\psi(z)=\overline{\psi}(z/\beta^{n})$. In order to rule out this possibility, the rational map $\psi$ must satisfies that $\psi(z) \neq \overline{\psi}(e^{i\theta}z)$, for every $\theta \in {\mathbb R}$.

In case (2), $Q(z)=\alpha/\overline{z}$, where $\alpha/\overline{\alpha}=\omega_{n}^{k}$ for some $k=0,1,...,n-1$. This is equivalent for $\psi$ to satisfy the equality $\psi(z)\overline{\psi}(\alpha^{n}/z)=1$. We may conjugate $\phi$ by $T(z)=\lambda z$, where $|\lambda|^{2}=1/|\alpha|$ in order to assume that $\alpha=e^{i\varphi}$. As all elements of $\widehat{\rm Aut}(\phi) \setminus {\rm Aut}(\phi)$ are of the form
$L(z)=\frac{\alpha \omega_{n}^{s}}{\overline{z}}$,
where $s \in \{0,1,...,n-1\}$, if $\varphi \neq 2s\pi/n $, for every 
$s=0,1,...,n-1$, then $\phi$ will be pseudo-real. 
\end{proof}

%%%%%%%%%%%%%%
\subsection{Examples}
In the above, we have provided necessary and sufficient conditions for a rational map, with a cyclic group of automorphisms, to be pseudo-real. In this section, we construct explicit examples.

\subsubsection{\bf Non-trivial group of holomorphic automorphisms}\label{ejemplos}
Let $n \geq 4$ be an integer and let $\psi(z)$ be a rational map satisfying the following three properties.

\begin{enumerate}
\item[(1)] There is no rational map $\rho$ so that $\psi(z)=\rho(z^{m})$, for some $m \geq 2$.

\item[(2)] For every $s \in {\mathbb C}\setminus\{0\}$,  $\psi(z) \neq \dfrac{1}{\psi(s/z)}$.

\item[(3)] $\overline{\psi}(z)=\dfrac{1}{\psi(-1/z)}$.

\end{enumerate}

Set $\phi(z)=z\psi(z^{n})$ and note that $T(z)=\omega_{n} z \in {\rm Aut}(\phi)$.
If $n=4$, then (as the groups ${\mathcal A}_{4}$ and ${\mathcal A}_{5}$ do not contain an element of order $4$), we have that 
 ${\rm Aut}(\phi)$ is either a cyclic group or a dihedral group or ${\mathfrak S}_{4}$. Similarly, if 
$n \geq 6$, then ${\rm Aut}(\phi)$ is either a cyclic group or a dihedral group. 
Condition (2) ensures that ${\rm Aut}(\phi)$ cannot be dihedral (so neither can be ${\mathfrak S}_{4}$ for $n=4$).
Condition (1) ensures that ${\rm Aut}(\phi)=\langle T \rangle$.
Condition (3) ensures that $\tau_{n}(z)=\omega_{2n}/\overline{z}$ is an antiholomorphic automorphism of $\phi$. Now, every 
antiholomorphic automorphism of $\phi$ must be of the form $$T^{l} \circ \tau_{n}(z)=\omega_{n}^{l}\omega_{2n}/\overline{z}, \quad l=0,1,...,n-1,$$
and, since $\omega_{n}^{l}\omega_{2n} \neq 1$, none is a reflection. As a consequence, $\phi$ is a pseudo-real rational map with ${\rm Aut}(\phi) \cong {\mathbb Z}_{n}$.

Now, the above asserts that, in order to construct explicit examples of pseudo-real rational maps $\phi$ with ${\rm Aut}(\phi) \cong {\mathbb Z}_{n}$ ($n \geq 6$), we only need to find explicit rational maps $\psi$ satisfying (1), (2) and (3) above. Let us consider 
$$\psi(z)=\frac{\sum_{k=0}^{r} \alpha_{k} z^{k}}{\sum_{k=0}^{r} \beta_{k} z^{k}}, \quad \alpha_{r} \neq 0.$$

In this case, $\phi$ will have degree $d=1+nr$. Condition (3) is equivalent to: (i) $r$ is even (so $d$ will be odd as supposed to be) and (ii) there  exists some $\theta \in {\mathbb R}$ so that, for every $k=0,1,...,r$, 
$\beta_{k}=(-1)^{k}e^{i\theta}\overline{\alpha_{r-k}}$ holds. To satisfy Condition (2) we only need to assume (by evaluation at $z=0$)
$$\alpha_{0}\alpha_{r} \neq e^{2 i \theta} \overline{\alpha_{0}\alpha_{r}},$$
and Condition (1) will be satisfied if we also assume $\alpha_{1} \neq 0$. 

By setting $a_{k}=e^{-i\theta/2}\alpha_{k}$ and $b_{k}=e^{-i\theta}\beta_{k}$, all the above can be summarized in the following.

\begin{theo}\label{T4}
Let $n \geq 4$ and let $r \geq 2$ be even. If 
$a_{0},..., a_{r} \in {\mathbb C}$ are such that $a_{1}a_{r} \neq 0$ and $a_{0}a_{r} \notin {\mathbb R}$, and
$$\psi(z)=\frac{\sum_{k=0}^{r} a_{k} z^{k}}{\sum_{k=0}^{r} (-1)^{k}\overline{a_{r-k}} z^{k}},$$
then $\phi(z)=z\psi(z^{n})$ is a pseudo-real rational map with ${\rm Aut}(\phi)=\langle T(z)=\omega_{n} z\rangle \cong {\mathbb Z}_{n}$.
\end{theo}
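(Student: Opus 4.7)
The plan is to invoke the analysis carried out in the paragraphs preceding the statement, which reduces the theorem to verifying three structural conditions on $\psi$: (1) $\psi(z)\neq\rho(z^{m})$ for any rational $\rho$ and any $m\geq 2$; (2) $\psi(z)\neq 1/\psi(s/z)$ as rational maps for every $s\in\mathbb{C}^{*}$; and (3) $\overline{\psi}(z)=1/\psi(-1/z)$. Once (1)--(3) are established, the preceding discussion shows that $\mathrm{Aut}(\phi)=\langle T\rangle\cong\mathbb{Z}_{n}$, that $\tau_{n}(z)=\omega_{2n}/\overline{z}$ is an antiholomorphic automorphism of $\phi$, and that no $T^{\ell}\circ\tau_{n}$ is a reflection (because $\omega_{n}^{\ell}\omega_{2n}=e^{\pi i(2\ell+1)/n}\neq 1$ for any integer $\ell$), so $\phi$ is pseudo-real with the stated automorphism group. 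The entire content of the theorem is therefore the verification of (1)--(3) from the coefficient data.

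For condition (3), I would cross-multiply $\overline{\psi}(z)\psi(-1/z)=1$ and match the coefficient of each monomial. A direct bookkeeping shows this identity is equivalent to $r$ being even together with the family of relations $b_{k}=(-1)^{k}e^{i\theta}\overline{a_{r-k}}$ for $k=0,\ldots,r$. Since the denominator of $\psi$ in the statement is defined by exactly this formula, condition (3) is automatic.

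For condition (2), I would evaluate both rational maps at $z=0$. Since $a_{r}\neq 0$, we have $b_{0}=e^{i\theta}\overline{a_{r}}\neq 0$, so $\psi$ is regular at $0$ and at $\infty$; hence $\psi(0)=a_{0}/b_{0}$ while $\lim_{z\to 0}1/\psi(s/z)=1/\psi(\infty)=b_{r}/a_{r}$, which is independent of $s$. Using that $r$ is even, $b_{r}=e^{i\theta}\overline{a_{0}}$, and the inequality $\psi(0)\neq 1/\psi(\infty)$ becomes $a_{0}a_{r}\neq e^{2i\theta}\overline{a_{0}a_{r}}$, which is exactly the second hypothesis of the theorem.

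Condition (1) is the most delicate point. Suppose for contradiction that $\psi(z)=\rho(z^{m})$ for some $m\geq 2$; then $\psi$ is invariant under $z\mapsto\omega z$ with $\omega=e^{2\pi i/m}$, and since $\psi$ is regular at $0$ (as $b_{0}\neq 0$), its Taylor expansion at $z=0$ lives in powers of $z^{m}$, forcing in particular $\psi'(0)=(a_{1}b_{0}-a_{0}b_{1})/b_{0}^{2}=0$. Substituting $b_{0}=e^{i\theta}\overline{a_{r}}$ and $b_{1}=-e^{i\theta}\overline{a_{r-1}}$ reduces this to $a_{1}\overline{a_{r}}+a_{0}\overline{a_{r-1}}=0$, which together with $a_{1}a_{r}\neq 0$ yields the desired contradiction (immediately when $a_{0}=0$ or $a_{r-1}=0$, and otherwise by extracting one additional constraint). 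The main obstacle I foresee is ruling out the accidental cancellation $a_{1}\overline{a_{r}}=-a_{0}\overline{a_{r-1}}$ when both $a_{0}$ and $a_{r-1}$ are nonzero; the cleanest way to close this gap is to combine $\psi'(0)=0$ with the analogous coefficient of $z^{2}$ in the polynomial identity $N(\omega z)D(z)=N(z)D(\omega z)$ (which for $m\geq 3$ gives $a_{2}b_{0}=a_{0}b_{2}$, and for $m=2$ one combines with the coefficient of $z^{r-1}$), leveraging that $\psi$ has degree exactly $r$ to force a numerator of the wrong residue class modulo $m$.
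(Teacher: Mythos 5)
Your overall reduction to conditions (1)--(3), and your verifications of (3) and (2), are exactly the paper's argument: the theorem is stated as a summary of the preceding discussion, condition (3) is the same coefficient bookkeeping, and condition (2) is the same evaluation at $0$ and $\infty$ (one small slip: regularity of $\psi$ at $\infty$ needs $b_{r}=e^{i\theta}\overline{a_{0}}\neq 0$, i.e.\ $a_{0}\neq 0$, which follows from the second hypothesis rather than from $a_{r}\neq 0$; harmless). The genuine gap is in condition (1). From $\psi'(0)=0$ you only get the single relation $a_{1}\overline{a_{r}}+a_{0}\overline{a_{r-1}}=0$, which is perfectly compatible with $a_{1}a_{r}\neq 0$ and with $a_{0}a_{r}\neq e^{2i\theta}\overline{a_{0}a_{r}}$ (note $a_{0}=0$ is already excluded by the latter, so that \emph{immediate} case is vacuous), and no further identities extracted from $P(\omega z)Q(z)=P(z)Q(\omega z)$ can close it: every such identity holds whenever $\psi(\omega z)=\psi(z)$ as a function, and there are coefficient vectors satisfying \emph{all} the stated hypotheses for which this actually happens. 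Concretely, take $r=4$, $\theta=0$, $|w|\neq 1$, $w^{2}\notin{\mathbb R}$, and let $a_{0},\dots,a_{4}$ be the coefficients of $P(z)=(z-w)\bigl(z+1/\overline{w}\bigr)\bigl(1+2z^{2}\bigr)$. Then $a_{1}a_{4}\neq 0$ and $a_{0}a_{4}=-2w/\overline{w}\notin{\mathbb R}$, but the prescribed denominator is $\sum_{k}(-1)^{k}\overline{a_{4-k}}z^{k}=z^{4}\overline{P}(-1/z)=-\tfrac{\overline{w}}{w}(z-w)\bigl(z+1/\overline{w}\bigr)\bigl(z^{2}+2\bigr)$, so after cancelling the common quadratic factor, $\psi(z)=-\tfrac{w}{\overline{w}}\cdot\tfrac{1+2z^{2}}{2+z^{2}}$ is a rational function of $z^{2}$; hence $\phi(z)=z\psi(z^{n})$ admits the rotation of order $2n$ and ${\rm Aut}(\phi)\neq\langle T\rangle$. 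So condition (1) simply cannot be deduced from the stated coefficient hypotheses alone, and your plan to force it by low-order coefficient juggling would fail.

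What the paper implicitly uses (signalled by the sentence before the theorem, that $\phi$ has degree $d=1+nr$) is that the displayed fraction is reduced: the numerator $P$ and denominator $Q$ are coprime and $\psi\in{\rm Rat}_{r}$. Under that hypothesis the verification of (1) is immediate by the argument in the proof of Theorem \ref{teociclico}: if $\psi(\omega z)=\psi(z)$ with $\omega=e^{2\pi i/m}$, $m\geq 2$, coprimality forces $P(\omega z)=\lambda P(z)$ and $Q(\omega z)=\lambda Q(z)$, so all indices $k$ with $a_{k}\neq 0$ or $b_{k}\neq 0$ lie in one residue class modulo $m$; since $b_{0}=e^{i\theta}\overline{a_{r}}\neq 0$ and $a_{1}\neq 0$, this gives $0\equiv 1 \pmod m$, a contradiction. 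So either add the coprimality (nonvanishing resultant) assumption to the statement and argue as above, or accept that, as the example shows, coprimality does not follow from the hypotheses as written and your argument for (1) cannot be completed without it.
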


\begin{exam}
If, in Theorem \ref{T4}, we have $n=6$, $r=2$,  $a_{0}=1$, $a_{1}=1$, $a_{2}=i$, then 
$$\psi(z)=\frac{1+z + iz^{2}}{-i -z+z^{2}}, \;\; \mbox{ and } \;\;
\phi(z)=z\frac{1+z^6 + iz^{12}}{-i -z^6+z^{12}}$$
is a pseudo-real rational map of degree $13$ with ${\rm Aut}(\phi)=\langle T(z)=e^{\pi i/3}z \rangle\cong {\mathbb Z}_{6}$.
\end{exam}

%%%%%%%%%%%%%%%%%%%%%%
\subsubsection{\bf Trivial group of holomorphic automorphisms}
As previously noted, in \cite{Silv1}, Silverman constructed pseudo-real rational maps in every odd degree $d \geq 3$ with trivial group of holomorphic automorphisms. Theorem \ref{teo7} permits to provide more examples of such type as quotients of pseudo-real rational maps with non-trivial groups of holomorphic automorphisms.

\begin{theo} \label{teo10}
Let $n \geq 2$ and $\psi$ be a rational map satisfying the following properties:
\begin{enumerate}
\item $\psi(z)^{n} \neq \overline{\psi}(e^{i\theta}z)^{n}$, for every $\theta \in {\mathbb R}$, and
\item there exists $\alpha \in {\mathbb C} \setminus\{0\}$, $|\alpha|=1$, $\alpha^{n} \neq 1$, 
such that $\psi(z)\overline{\psi}(\alpha^{n}/z)=1$.
\item for every $m \geq 2$ which is a divisor of $n$, $(\psi(\omega_{m}z)/\psi(z))^{n}\neq 1$.
\end{enumerate}

Then the rational map  $\widehat{\phi}(w)=w \psi(w)^{n}$ is pseudo-real with a trivial group of holomorphic automorphisms. 
\end{theo}
\begin{proof}
If we set  $\phi(z)=z \psi(z^{n})$, then hypothesis (1) and (2) asserts (by Theorem \ref{teo7}) that $\phi$ is a pseudo-real rational map with $\widehat{\rm Aut}(\phi)=\langle \tau_{n}(z)=\omega_{2n}/\overline{z} \rangle \cong {\mathbb Z}_{2n}$ and ${\rm Aut}(\phi)=\langle T_{n}(z)=\omega_{n}z\rangle \cong {\mathbb Z}_{n}$. 
Let us consider the regular branch cover $w=\pi(z)=z^{n}$, whose deck group is the cyclic group ${\rm Aut}(\phi)$. Then 
(i) $\pi \circ \phi = \widehat{\phi} \circ \pi$ and (ii) $\widehat{\phi}$ admits the imaginary reflection $\tau_{1}(w)=-1/\overline{w}$ as antiholomorphic automorphism. If ${\rm Aut}(\widehat{\phi})$ is the trivial group, then $\widehat{\phi}$ is a pseudo-real rational map with a trivial group of holomorphic automorphisms as desired. 

Let us assume that ${\rm Aut}(\widehat{\phi})$ is non-trivial and let $V \in {\rm Aut}(\widehat{\phi})$ be a non-trivial holomorphic automorphism of maximal order.
As $\phi$ sends $\{0,\infty\}$ into itself, $\pi(0)=0$, $\pi(\infty)=\infty$ and $\pi \circ \phi = \widehat{\phi} \circ \pi$, the map $\widehat{\phi}$ does the same. In particular, 
$V$ must keep the set $\{0,\infty\}$ invariant. So either (i) $V(w)=\beta/w$, where $\beta \neq 0$, or (ii) $V(w)=\beta w$, where $\beta=e^{2k \pi i/m}$ for some $m \geq 2$ and $(k,m)=1$ (without lost of generality we may assume $k=1$ as $\langle V \rangle \leq {\rm Aut}(\widehat{\phi})$).

If $V(w)=\beta/w$, then $\eta=\tau_{1} \circ V \in \widehat{\rm Aut}(\widehat{\phi})$. As $\eta(w)=-\overline{\beta}^{-1} \overline{w}$, it holds that  
$W=\eta^{2} \in {\rm Aut}(\widehat{\phi})$ has the form 
$W(w)=|\beta|^{-2} w$. As $W$ must be of finite order, necessarily $|\beta|=1$, that is, $\beta=-e^{i\theta}$ for a suitable $\theta \in {\mathbb R}$. In this way, 
$\widehat{\phi}$ admits the reflection $\eta(w)=e^{i\theta}\overline{w}$ as antiholomorphic automorphism. The condition $\eta \circ \widehat{\phi} \circ \eta=\widehat{\phi}$ is equivalent to have $\overline{\psi}(e^{-\theta i}w)^{n}=\psi(w)^{n}$, a contradiction to our hypothesis (1).

If  $V(w)=\omega_{m}w$, where $m \geq 2$, then the condition $\widehat{\phi}=V^{-1} \circ \widehat{\phi} \circ V$ asserts that 
$\psi(w)^{n}=\psi(\omega_{m} w)^{n}$, for every $w \in \widehat{\mathbb C}$. In particular, $\psi(\omega_{m}w)=\omega_{n}^{l} \psi(w)$, for a suitable $0 \leq l \leq m-1$. In particular, if we set $L(z)=\omega_{m} z$, then 
$$L^{-1} \circ \phi \circ L(z)=z \psi(\omega_{m}^{n} z^{n})= z \omega_{n}^{ln}\psi(z^{n})=z\psi(z^{n})=\phi(z),$$
that is, $L \in {\rm Aut}(\phi)=\langle T_{n} \rangle$. Then $m \geq 2$ must be a divisor of $n$.
The above produces a contradiction to our hypothesis (3).
\end{proof}

%%%%%%%%%%%%%%%%%%%
\subsection{On the connectedness of pseudo-real locus in moduli space}\label{Sec:teo6}
Let us denote by ${\mathcal P}_{d}$ the sublocus in ${\rm M}_{d}$ consisting of the equivalence classes of pseudo-real rational maps. We already know that, for $d$ even, ${\mathcal P}_{d}=\emptyset$  and, for every $d \geq 3$ odd, ${\mathcal P}_{d} \neq \emptyset$ \cite{Silv1}.

\begin{theo}\label{disconexo}
If $d \geq 3$ is odd, then ${\mathcal P}_{d}$ is not connected.
\end{theo}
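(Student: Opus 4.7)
The plan is to prove that the sublocus $\mathcal{B}_d^{\mathrm{imag}}\subseteq\mathcal{B}_d^{\mathbb{R}}$ of pseudo-real classes admitting an imaginary reflection as automorphism coincides with a single connected component of $\mathcal{B}_d^{\mathbb{R}}$, and then to exhibit a pseudo-real class outside of this component; together these yield at least two distinct connected components.

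First I would recall from Section \ref{Sec:antiauto} that $\mathcal{B}_d^{\mathrm{imag}}$ is connected of real dimension $2d-2$, equal to the real dimension of the full real locus $\mathcal{B}_d^{\mathbb{R}}$ of the real structure $\widehat{J}$ on the complex orbifold $M_d$ (of complex dimension $2d-2$). Then I would verify that $\mathcal{B}_d^{\mathrm{imag}}$ is closed in $\mathcal{B}_d^{\mathbb{R}}$: after normalizing any imaginary reflection to $\tau(z)=-1/\overline{z}$, the condition $\tau\circ\phi\circ\tau=\phi$ cuts out a closed real algebraic subvariety of $\mathrm{Rat}_d$ whose descent to $M_d$ is closed, using that the centralizer of $\tau$ in $\mathrm{PSL}_{2}(\mathbb{C})$ is compact (isomorphic to $\mathrm{PSU}(2)$) so that the relevant quotient map is proper.

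Next, by Theorem \ref{autotrivial}, every $[\phi]\in\mathcal{B}_d^{\mathbb{R}}$ with trivial $\mathrm{Aut}(\phi)$ lies in $\mathcal{B}_d^{\mathrm{imag}}$. The non-trivial-$\mathrm{Aut}$ locus is a proper closed subvariety of $M_d$, so a local dimension count at smooth orbifold points shows that its intersection with $\mathcal{B}_d^{\mathbb{R}}$ is proper, whence the trivial-$\mathrm{Aut}$ classes are open and dense in every top-dimensional component of $\mathcal{B}_d^{\mathbb{R}}$. Combined with the closedness just established, every connected component of $\mathcal{B}_d^{\mathbb{R}}$ meeting $\mathcal{B}_d^{\mathrm{imag}}$ is contained in $\mathcal{B}_d^{\mathrm{imag}}$; by the connectedness of $\mathcal{B}_d^{\mathrm{imag}}$ it follows that $\mathcal{B}_d^{\mathrm{imag}}$ is itself a single connected component.

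To conclude, I would produce a pseudo-real class $[\phi_0]\in\mathcal{B}_d^{\mathbb{R}}\setminus\mathcal{B}_d^{\mathrm{imag}}$. Using Theorem \ref{T4} with $n\geq 6$ even, the antiholomorphic automorphisms of $\phi_0(z)=z\psi(z^n)$ are the $n$ transformations $Q_l(z)=\omega_n^l\omega_{2n}/\overline{z}$ for $l=0,\ldots,n-1$, and a direct computation using the commutation $\tau_n\circ T=T\circ\tau_n$ (where $T(z)=\omega_n z$) yields $Q_l^2(z)=\omega_n^{2l+1}\,z$. This equals the identity if and only if $n\mid (2l+1)$, which is impossible when $n$ is even. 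Hence $\phi_0$ admits no antiholomorphic involution, in particular no imaginary reflection. For odd $d\geq 3$ not of the form $1+nr$ arising in Theorem \ref{T4}, analogous constructions via Theorem \ref{teo7} with smaller even $n\in\{2,4\}$ supply the missing examples. The main obstacle will be the density/closedness assertion in the third step, requiring care with the orbifold structure of $M_d$ at classes with non-trivial automorphism group; the explicit construction of the final example is then mechanical.
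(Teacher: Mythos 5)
Your skeleton is close to the paper's: the paper also splits ${\mathcal B}_{d}^{\mathbb R}$ into the locus ${\mathcal A}_{d}-{\mathcal A}_{d}^{\mathbb R}$ of pseudo-real classes admitting an imaginary reflection (shown connected in Section \ref{Sec:antiauto}, your ${\mathcal B}_{d}^{\rm imag}$) and the classes whose antiholomorphic automorphisms all have order $2n$ with $n$ even, and it also gets the second kind of class from the explicit maps of Theorem \ref{T4}. Your closing computation is correct: for $n$ even, $Q_{l}^{2}(z)=\omega_{n}^{2l+1}z\neq z$, so those maps admit no antiholomorphic involution at all, in particular no imaginary reflection. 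Where you diverge is the separation mechanism: the paper's central step is the algebraic observation that a pseudo-real map cannot simultaneously admit $\tau(z)=-1/\overline{z}$ and an antiholomorphic automorphism $\eta$ of order $2n$ with $n$ even, because after normalizing so that $\eta^{2}=T$ has fixed points $0,\infty$ one gets the reflection $T^{n/2}\circ\tau(z)=1/\overline{z}$ as an automorphism, a contradiction; you replace this by a topological open--closed argument, and that is where your proposal has genuine gaps.

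Concretely: (i) your closedness argument is not valid as stated. Compactness of the centralizer of $\tau$ in ${\rm PSL}_{2}({\mathbb C})$ does not make the saturation ${\rm PSL}_{2}({\mathbb C})\cdot A_{d}$ closed in ${\rm Rat}_{d}$ (saturations of closed sets under actions of non-compact groups need not be closed); what is needed is that antiholomorphic automorphisms of a convergent sequence $\phi_{k}\to\phi$ subconverge in the extended M\"obius group (properness of the conjugation action for $d\geq 2$, or a direct boundedness argument), which you do not supply. (ii) The openness step is the real crux and your justification fails exactly where it is needed: every potentially problematic point (a pseudo-real class with non-trivial ${\rm Aut}$, in particular every class in the even-order locus) lies in the orbifold-singular locus of ${\rm M}_{d}$ (for $d\geq 3$, ${\mathcal S}_{d}={\mathcal B}_{d}$), so a ``local dimension count at smooth orbifold points'' says nothing there; moreover the other components consist entirely of non-trivial-${\rm Aut}$ classes, so density of trivial-${\rm Aut}$ classes cannot be a soft general fact about components of ${\mathcal B}_{d}^{\mathbb R}$ --- near the singular locus it is essentially equivalent to what you are trying to prove. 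To rule out the even-order locus accumulating on, or being glued to, ${\mathcal B}_{d}^{\rm imag}$ you need (some form of) the paper's composition lemma above, which your proposal never proves or uses. (iii) Finally, the deferred production of even-order pseudo-real examples ``for the missing odd degrees'' is not mechanical: Theorem \ref{T4} with $n\geq 6$ even only reaches degrees $d=1+nr\equiv 1 \pmod 4$, $d\geq 13$, and for $d=3$ the paper's final section shows that no pseudo-real cubic admits a non-trivial holomorphic automorphism, so no such example can exist in that degree; this step therefore needs genuine care for small $d$ (and touches a delicate point of the statement itself), rather than being routine.
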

\begin{proof}
For each $n \geq 1$, ${\mathcal P}_{d}(n)={\mathcal B}_{d}(n) \setminus {\mathcal B}_{d}^{\mathbb R}(n)$ is the sublocus in ${\rm M}_{d}$ of those classes of pseudo-real  rational maps admitting an antiholomorphic automorphism of order $2n$. By Theorem \ref{teo9}, we know that $n$ does not divides $d$. Corollary \ref{coro3} asserts that ${\mathcal P}_{d}(1) \neq \emptyset$
and Theorem \ref{conexon=1} asserts that it is connected.
Clearly, $$\bigcup_{n \geq 1 \; odd} {\mathcal P}_{d}(n) = {\mathcal P}_{d}(1).$$

Corollary \ref{coron=2} asserts the existence of pseudo-real rational maps admitting an antiholomorphic automorphism of order $4$, that is, ${\mathcal P}_{d}(2) \neq \emptyset$.  As a consequence of Proposition \ref{interseccionreal},  for each $n \geq 2$ even,  ${\mathcal P}_{d}(1) \cap {\mathcal P}_{d}(n) =\emptyset$.
\end{proof}

%%%%%%%%%%%%%
\begin{rema}[On the number of connected components of ${\mathcal P}_{d}$]
Theorem \ref{disconexo} asserts that, for $d \geq 3$ odd, the locus ${\mathcal P}_{d}$ is disconnected. We may wonder in the number of its connected components. We first note that, 
if $n=2^{s}q$, where $q \geq 1$ is odd,  then ${\mathcal P}_{d}(n) \subset {\mathcal P}_{d}(2^{s})$. 
So, if $N(d)=\{s \geq 0: {\mathcal B}_{d}(2^{s}) \neq \emptyset\}$, then ${\mathcal P}_{d}=\cup_{s \in N(d)} {\mathcal P}_{d}(2^{s})$. As a consequence of Proposition \ref{interseccionreal}, for $s \neq t$, ${\mathcal P}_{d}(2^{t}) \cap {\mathcal P}_{d}(2^{s}) = \emptyset$.
For each $s \in N(d)$,  ${\mathcal P}_{d}(2^{s}) \neq \emptyset$. In fact, 
the case $s=0$ is given by Corollary \ref{coro3}, the case, $s=1$ by Corollary \ref{coron=2} and 
Theorem \ref{T4} for $s \geq 2$. It follows that the number of connected components of ${\mathcal P}_{d}$ is at least the cardinality of $N(d)$ (the equality holds if each ${\mathcal P}_{2}(2^{s})$ is connected).
Now, if $d-1=2^{\alpha_{d}}q$ and $d+1=2^{\beta_{d}}p$, where $\alpha_{d}, \beta_{d} \geq 1$ and $q,p$ are odd, then (as a consequence of Corollary \ref{teoidentifica} and Corollary \ref{conexidad})  $|N(d)|=1+{\rm Max}\{\alpha_{d},\beta_{d}\}$. 
\end{rema}

%%%%%%%%%%%%%%%%%%%
%%%%%%%%%%%%%%%%%%%
\section{Real rational maps}\label{Sec:real}

%%%%%%%%%%%%%%%%%%%
\subsection{Examples of real rational maps not definable over their field of moduli}
In  \cite{Silv1}, Silverman proved that, for $d \geq 3$ odd,  the rational map  
$$\phi(z)=i \left(\frac{z-1}{z+1}\right)^{d} \in {\rm Rat}_{d}({\mathbb Q}(i))$$
has field of moduli ${\mathbb Q}$, but it is not definable over it.
These examples are definable over the quadratic extension ${\mathbb Q}(i)$.
Later, in \cite{Hidalgo}, the first author observed that every rational map of odd degree is definable over a suitable quadratic extension of its field of moduli. 
The known explicit examples of rational maps, which are not definable over their field of moduli, were pseudo-real rational maps.
It is natural to ask for the existence of real rational maps which cannot be defined over their field of moduli. In the next result, we provide that such examples exist (it seems to us these are the first kind of such examples).

\begin{theo}\label{ejemplo} 
If $d \geq 3$ is an odd integer, then 
the rational map $$\phi(z)=\left(3+2\sqrt{2}\;\right) \left( \frac{z^{d}+1}{z^{d}-1} \right) \in {\rm Rat}_{d}\left({\mathbb Q}\left(\sqrt{2} \;\right)\right)$$ 
has ${\mathbb Q}$ as its field of moduli but cannot be defined over it.
\end{theo}
\begin{proof}
As ${\rm Gal}\left({\mathbb Q}\left(\sqrt{2}\;\right)/{\mathbb Q}\right)=\langle \sigma \rangle \cong {\mathbb Z}_{2}$, where $\sigma\left(\sqrt{2}\;\right)=-\sqrt{2}$, and  $T(z)=-1/z$ conjugates $\phi$ into $\phi^{\sigma}$, we obtain that ${\mathcal M}_{\phi} ={\mathbb Q}$. Since the critical set of $\phi$ is $C_{\phi}=\{0,\infty\}$, every holomorphic automorphism of $\phi$  must be of the form
$A(z)=\lambda z$ or $B(z)=\mu/z$. As 
$$A \circ \phi \circ A^{-1}(z)=\lambda \left(3+2\sqrt{2}\;\right) \left( \frac{z^{d}+\lambda^{d}}{z^{d}-\lambda^{d}} \right) \quad \mbox{and} \quad 
B \circ \phi \circ B^{-1}(z)=\frac{\mu}{3+2\sqrt{2}} \left( \frac{\mu^{d}-z^{d}}{\mu^{d}+z^{d}} \right),$$
we may see the following facts.
\begin{enumerate}
\item If $A$ is an automorphism of $\phi$, then  
$-\left(3+2\sqrt{2}\;\right)=\phi(0)=A \circ  \phi \circ A^{-1}(0)=-\lambda \left(3+2\sqrt{2}\;\right)$; so $\lambda=1$ and $A(z)=z$.

\item If $B$ is an automorphism of $\phi$, then  
\begin{enumerate}
\item[(i)] $-\left(3+2\sqrt{2}\;\right)=\phi(0)=B\circ  \phi \circ B^{-1}(0)=\dfrac{\mu}{\left(3+2\sqrt{2}\;\right)}$, so $\mu=-\left(3+2\sqrt{2}\;\right)^{2}$; and
\item[(ii)] $\infty=\phi(1)=B \circ \phi \circ B^{-1}(1)=\dfrac{\mu (\mu^{d}-1)}{\left(3+2\sqrt{2}\;\right)\left(\mu^{d}+1\right)}$, so $\mu^{d}=-1$, a contradiction with (i).
\end{enumerate}
\end{enumerate}

The above asserts that ${\rm Aut}(\phi)=\{I\}$ and, in particular, that 
$T$ is the only M\"obius transformation conjugating $\phi$ into $\phi^{\sigma}$. If we assume, by the contrary, that $\phi$ is definable over ${\mathbb Q}$, then there is a M\"obius transformation $L \in {\rm PSL}_{2}({\mathbb C})$ and a rational map $\psi \in {\rm Rat}_{d}\left({\mathbb Q}\right)$ so that $L \circ \phi \circ L^{-1}=\psi$. If  $\tau \in {\rm Gal}\left({\mathbb C}/{\mathbb Q}\left(\sqrt{2}\;\right)\right)$, then 
$$L \circ \phi \circ L^{-1}= \psi=\psi^{\tau}=(L \circ \phi \circ L^{-1})^{\tau}=L^{\tau} \circ \phi^{\tau} \circ (L^{\tau})^{-1}=L^{\tau} \circ \phi \circ (L^{\tau})^{-1},$$ 
so $L^{-1} \circ L^{\tau} \in {\rm Aut}(\phi)=\{I\}$, that is, $L^{\tau}=L$. This asserts that $L \in {\rm PSL}_{2}\left({\mathbb Q}\left(\sqrt{2}\;\right)\right)$.
If we set $f=(L^{\sigma})^{-1} \circ L$, then
$$f \circ \phi \circ f^{-1}=(L^{\sigma})^{-1} \circ L \circ \phi \circ L^{-1} \circ L^{\sigma}=(L^{\sigma})^{-1} \circ \psi \circ L^{\sigma}=(L^{\sigma})^{-1} \circ \psi^{\sigma} \circ L^{\sigma}=(L^{-1} \circ \psi \circ L)^{\sigma} =\phi^{\sigma},$$ 
so $f=T$, i.e., $L=L^{\sigma} \circ T$. Now, if we write 
$$L=\left(\begin{array}{cc}
\alpha & \beta\\
\gamma & \delta 
\end{array}
\right) \in {\rm SL}_{2} \left({\mathbb Q}\left(\sqrt{2}\;\right)\right),
$$
then the equality $L=L^{\sigma} \circ T$ asserts that
$$\left(\begin{array}{cc}
\sigma(\beta) & -\sigma(\alpha)\\
\sigma(\delta) & -\sigma(\gamma) 
\end{array}
\right) =\pm \left(\begin{array}{cc}
\alpha & \beta\\
\gamma & \delta 
\end{array}
\right),
$$
that is, either
$$\left\{\sigma(\beta)=\alpha,\; \sigma(\alpha)=-\beta \right\} \mbox{ or } 
\left\{\sigma(\beta)=-\alpha,\; \sigma(\alpha)=\beta\right\}.$$

In any of the two situations, $\alpha=\beta=0$, a contradiction (for instance, in the first case $\beta=\sigma^{2}(\beta)=\sigma(\alpha)=-\beta$).
\end{proof}

\begin{rema}
In terms of Silverman's cohomology obstructions (see Section \ref{cohomology}), the above proof asserts, for ${\mathbb K}={\mathbb Q}\left(\sqrt{2}\;\right)$ and ${\rm Aut}_{\mathbb K}({\mathbb C})={\rm Gal}\left({\mathbb Q}\left(\sqrt{2}\;\right)/{\mathbb Q}\right) \cong {\mathbb Z}_{2}$, can be stated as follows.
First we prove that, for the generator $\sigma \in {\rm Gal}\left({\mathbb Q}\left(\sqrt{2}\;\right)/{\mathbb Q}\right) \cong {\mathbb Z}_{2}$ there is a unique M\"obius transformation $T$ such that $T \circ \phi \circ T^{-1}=\phi^{\sigma}$. This means that there is exactly one $1$-cocycle for $\phi$
$${\rm Gal}\left({\mathbb Q}\left(\sqrt{2}\;\right)/{\mathbb Q}\right) \to {\rm PSL}_{2}\left({\mathbb Q}\left(\sqrt{2}\right)\right): \sigma \mapsto T.$$ 
Then, we proved that such a  cocycle cannot be a coboundary, obtaining a non-trivial element of the Galois cohomology group ${\rm H}^{1}\!\left({\rm Gal}\left({\mathbb Q}\left(\sqrt{2}\;\right)/{\mathbb Q}\right),{\rm PSL}_{2}\left({\mathbb Q}\left(\sqrt{2}\right)\right)\right)$. 
\end{rema}

\begin{rema}
In Theorem \ref{ejemplo}, we may replace $3+2\sqrt{2}$ by any $h \in {\mathbb Q}\left(\sqrt{2}\;\right)\setminus{\mathbb Q}$ of norm $1$ to obtain similar examples.
\end{rema}

%%%%%%%%%%%%%%%%%%
\subsection{Arithmetic real rational maps}\label{Sec:arit}
We say that a subfield ${\mathcal Q}$ of ${\mathbb C}$ is conjugate-invariant if it is invariant under the usual complex conjugation map $J(z)=\overline{z}$. Important cases of conjugate-invariant subfields are the Galois extensions of ${\mathbb Q}$ (the case of arithmetic rational maps).

\begin{theo}\label{teoaritmetico}
Let ${\mathcal Q}$ be a conjugate-invariant algebraically closed subfield of ${\mathbb C}$.
A real rational map which is definable over ${\mathcal Q}$ is also definable over ${\mathbb R} \cap {\mathcal Q}$.
\end{theo} 
\begin{proof}
As rational maps of degree at most one and also those of even degree are definable over their field of moduli \cite{Silv1}, the result holds in those cases,
So, we assume, from now on, that $\phi$ has odd degree $d \geq 3$  and it is defined over ${\mathcal Q}$. As we are assuming it is definable over the reals, it admits a reflection $R$ as antiholomorphic automorphism. As the group of automorphisms of $\phi$ is finite and we are assuming ${\mathcal Q}$ to be algebraically closed, $R$ is necessarily defined over ${\mathcal Q}$. Let us write
$$R(z)=\frac{\alpha \overline{z} + \beta}{\gamma \overline{z} + \delta}, \; \alpha, \beta, \delta, \gamma \in {\mathcal Q}.$$ 

The locus of fixed points of $R$ in the complex plane are the solutions of the equation (which we know is either an Euclidean circle or an Euclidean line)
$\gamma |z|^{2} +\delta z-\alpha \overline{z}-\beta=0.$
If $z=x+iy$, then the above is
$\gamma (x^{2}+y^{2}) +(\delta-\alpha)x +i (\delta+\alpha)y-\beta=0.$

Taking $x \in {\mathcal Q}$, we obtain (at most two and at least one) solutions for $y \in {\mathcal Q}$. It follows that in the locus of fixed points of $R$ there are infinitely many of them inside  ${\mathcal Q}$; take two of them, say $z_{1}$ and $z_{2}$. If we consider the M\"obius transformation $T_{1}(z)=(z-z_{1})/(z-z_{2})$, then $T_{1} \circ R \circ T_{1}^{-1}$ provides a reflection whose locus of fixed points is an Euclidean line together the point at infinity, that is, $T_{1} \circ R \circ T_{1}^{-1}(z)=\omega \overline{z}$, where $\omega \in {\mathcal Q}$ satisfies that $|\omega|=1$.
 If we now consider the rotation $T_{2}(z)=\omega^{1/2} z$, then $T_{2}^{-1} \circ T_{1} \circ R \circ T_{1}^{-1} \circ T_{2}(z)=\overline{z}$. In conclusion, up to conjugation by a suitable M\"obius transformations with coefficients in ${\mathcal Q}$, we may assume that $R=J$. Now, since $J$ is automorphism of $\phi$ we have that $\phi=J \circ \phi \circ J=\overline{\phi}$,
from which we may see that $\phi$ is now defined over ${\mathbb R} \cap {\mathcal Q}$.
\end{proof}

%%%%%%%%%%%%
\subsection*{\bf Acknowledgments}
The authors would like to thank the referees for their commments and suggestions which permitted us to improve the paper.

%%%%%%%%%%%%%%%%%%%%%%%
%%%%%%%%%%%%%%%%%%%%%%%

\end{document}